\newcommand{\diam}{{\text{diam}}}
\newcommand{\var}{{\text{var}}}
\newtheorem{thm}{Theorem}
\newtheorem{lemma}{Lemma}
\newtheorem{proposition}{Proposition}
\newtheorem*{remark}{Remark}
\newtheorem{definition}{Definition}
\DeclareMathOperator{\interior}{int}
\providecommand{\floor}[1]{\left\lfloor \, #1 \, \right\rfloor}
\newcommand{\cA}{{\mathcal A}}
\title[Local escape rates for $\phi$-mixing dynamical systems]{Local escape rates for $\phi$-mixing dynamical systems}
  \date{\today}
\begin{document}

\begin{abstract}
We show that dynamical systems with $\phi$-mixing measures have
local escape rates which are exponential with rate $1$ at non-periodic
points and equal to the extremal index at periodic points.
We apply this result to equilibrium states on subshifts of finite type, Gibbs-Markov systems,
expanding interval maps, Gibbs states on conformal repellers and more
generally to Young towers and by extension to all systems that can be 
modeled by a Young tower.
\end{abstract}

\author{N Haydn}
\thanks{N Haydn, Department of Mathematics, University of Southern California,
Los Angeles, 90089-2532. E-mail: {\tt {nhaydn@usc.edu}}.} 
\author{F Yang}
\thanks{F Yang, Department of Mathematics, University of Oklahoma,
Norman,  73019-3103. E-mail: {\tt {fan.yang-2@ou.edu}}.}

\maketitle

{\em Keywords:} classical ergodic theory, symbolic dynamics, escape rates, entry times distribution

{\em MSC classification:} 37A25; 37A05

{\em Acknowledgement:} The first named author was partially supported by Simons Foundation (ID 526571)

\tableofcontents
 
\section{Introduction}

Recently there has been increased interest in open dynamical systems
where a hole is placed in the space and points when they hit 
that hole are removed from the dynamic. This is like oil that has 
been spilled in the sea and is removed when it gets into contact 
with the coastline. The associated decay rate then describes the rate
at which mass is removed by hole over time. Clearly, a larger hole
has a larger escape rate and vice versa, a smaller hole will have a
smaller escape rate. Rates  of escape have 
been discussed for instance in \cite{AB10, BD07, CVdB, BY11, LMD, DY06}
and some other places. In some way such questions are 
similar to the statistics of hitting time where one looks at the scaled limiting 
distribution of hitting a shrinking neighbourhood. 

In this paper we consider localised escape rates which one 
obtains when the size of the hole shrinks to zero around a 
point. Since the escape rate will decrease as the hole shrinks
one scales the escape rate by the measure of the hole. 
The resulting limit is then called the localised escape rate.
In~\cite{BY11} the localised escape rate was considered for the 
doubling map on the interval and it was found to be equal to one
for non-periodic points. For periodic points it turned out to 
be given by the expansion rate of the map over a period. 
A similar result was shown in~\cite{FP} for conformal attractors
using restricted transfer operators on a shift space much in
the way as Hirata did in~\cite{Hirata1} for the entry times
distribution.

In this paper we show that the localised escape rate for 
$\phi$-mixing maps is exponential with rate one for 
non-periodic points and that for periodic points it is 
given by the extremal index. We then apply this result to
maps on metric spaces and derive the same limit 
in the case of balls. We show a similar result for maps
that allow for Young towers with exponential tails.
In the last section we give some examples, which are
(i) equilibrium states for Axiom A systems
where the holes are cylinder sets,
(ii) Gibbs-Markov systems
(iii) expanding maps of the interval which includes
 the case considered in~\cite{BY11,BDT}  and (iv)
 conformal maps where we recover the result of~\cite{FP}.
 For interval maps similar results were obtained in~\cite{BDT}
 using the transfer operator and the approximation results of~\cite{KL99}.

\section{Main results}

Let $T$ be a map on the space $\Omega$ and $\mu$ be a $T$-invariant probability measure 
on $\Omega$. We assume that there is a (measurable) partition $\mathcal{A}$ of 
$\Omega$ and denote by $\mathcal{A}^n=\bigvee_{j=0}^{n-1}T^{-j}\mathcal{A}$  its $n$th join. $\mathcal{A}^n$ is a partition of $\Omega$ and its elements are
called $n$-cylinders. For a point $x\in\Omega$ we denote by $A_n(x)\in\mathcal{A}^n$ 
the unique $n$-cylinder that contains the point $x$. 
We assume that $\mathcal{A}$ is generating, that is $\bigcap_nA_n(x)$ consists of the 
singleton $\{x\}$.

\begin{definition} (i) The measure $\mu$ is {\em left $\phi$-mixing} with respect to  $\mathcal{A}$
if
$$
|\mu(A \cap T^{-n-k} B) - \mu(A)\mu(B)| \le \phi(k)\mu(A)
$$
for all $A \in \sigma(\mathcal{A}^n)$, $n\in\mathbb{N}$ and  $B \in \sigma(\bigcup_j \mathcal{A}^j )$,
where $\phi(k)$ is a decreasing function which converges to zero as $k\to\infty$. Here $\sigma(\mathcal{A}^n)$ is the $\sigma$-algebra generated by $n$-cylinders.\\
 (i) The measure $\mu$ is {\em right $\phi$-mixing} w.r.t.\  $\mathcal{A}$
if
$$
|\mu(A \cap T^{-n-k} B) - \mu(A)\mu(B)| \le \phi(k)\mu(B)
$$
for all $A \in \sigma(\mathcal{A}^n)$, $n\in\mathbb{N}$ and $B \in \sigma(\bigcup_j \mathcal{A}^j )$,
where $\phi(k)\searrow0$.
\end{definition}

As has been shown by Abadi~\cite{Abadi01} that the measure of cylinder sets decreases exponentially
 fast, that is $\mu(A_n(x))\le \gamma^n$ for all $n$ and $x$ where $\gamma<1$.
 Usually a $\phi$-mixing measure is understood to be left $\phi$-mixing,
 but since it is an asymmetric property we could just as well consider the 
 right $\phi$-mixing property and obtain equivalent results. A measure that
 is left $\phi$-mixing is not necessarily right $\phi$-mixing and vice versa.
 However in our case the arguments are symmetric and we will restrict ourselves
 to demonstrate the proofs only in the left $\phi$-mixing case. The proofs for 
 the right $\phi$-mixing case are done with the obvious modifications.
 
 For a subset $U\subset \Omega$ we define the {\em entry/return time function}
 $\tau_U$ as the first time along its orbit that a point $x\in\Omega$ hits $U$,
 that is 
 $$
 \tau_U(x)=\inf\{j\ge1: T^jx\in U\}.
 $$
 If we consider all $x$ in $\Omega$ then it is the entry time and if we 
 restrict to $U$ then it is the return time. According to Poincar\'e's 
 recurrence theorem $\tau_U$ is finite on $U$ almost surely and 
 if $\mu$ is ergodic then $\int_U\tau_U(x)\,d\mu(x)=1$ by Kac's theorem.
 Clearly $\tau_U\ge\tau_{U'}$ if $U\subset U'$.
 We can now define the {\em escape rate} to $U$ by
 $$
 \rho_U=\lim_{t\to\infty}\frac1t|\log\mathbb{P}(\tau_U>t)|
 $$
 whenever the limit exists.
 Observe that if $U\subset U'$ then $\mathbb{P}(\tau_{U'}>t)\le\mathbb{P}(\tau_{U}>t)$
 and consequently $\rho_U\le\rho_{U'}$.
 For the partition $\mathcal{A}$ 
 we let $U_n\in\sigma(\mathcal{A}^n)$, $n=1,2,\dots$, be a sequence of 
 nested sets that contracts to a single point $x$ then we define by
 $$
 \rho(x)=\lim_{n\to\infty}\frac{\rho_{U_n}}{\mu(U_n)}.
 $$
 the {\em localised escape rate} if the limit exists. Part of this paper is to 
 show that under some reasonable assumptions on $U_n$ the limit does not 
 depend on the sequence $U_n$. For instance one can take $U_n=A_n(x)$.
 
 The return times statistics has been studied extensively beginning with
 Pitskel~\cite{Pit91}, Hirata~\cite{Hirata1} and others. For $\psi$-mixing measures Galves and Schmitt
 developed a method to show that the first entry time is exponentially distributed
 which subsequently was by Abadi~\cite{Abadi01,Abadi04} extended to $\phi$-mixing and then also
 to $\alpha$-mixing systems. It was found that entry times and return times are
 exponentially distributed for non-periodic points and that for 
 periodic points  the return times distribution has a point mass $1-\vartheta$
 at the origin which corresponds to the periodicity at the point.
 To be more precise, let $x\in\Omega$ be a periodic point with minimal period
 $m$, then 
 $$
 \vartheta(x)=\lim_{n\to\infty}\frac{\mu(U_n\cap T^{-m}U_n)}{\mu(U_n)}
 $$
 provided the limit exists. For simplicity we put $\vartheta(x)=0$ for 
 non-periodic points $x$.

For a subset $U_n\in\sigma(\mathcal{A}^n)$ we put for $j\le n$
$$
U_n^j=\begin{cases}A_j(T^{n-j}U_n)&\mbox{if $\mu$ is left $\phi$-mixing}\\
A_j(U_n)&\mbox{if $\mu$ is right $\phi$-mixing}\end{cases},
$$
where 
$$
A_j(U_n)=\bigcup_{B\in\mathcal{A}^j:\,B\cap U_n\not=\varnothing}B
$$
is the outer $j$-cylinder approximation of $U_n$.
Notice that in either case $U_n^j$ lies in $\sigma(\mathcal{A}^j)$ and 
is the best outer approximation of $U_n$ by unions of $j$-cylinders 
from the right or left respectively. In the first case we have $U_n \subset T^{-(n-j)}(U_n^j)$ and in the second case $U_n \subset U^j_n$ for all $j\le n$.

Let $U_n\in\sigma(\mathcal{A}^n), n=1,2,\dots$ be a sequence of neighbourhoods of a point $x\in\Omega$. 
If $x$ is periodic with minimal period $m$ then we put 
$U_{n,u}=\bigcap_{j=0}^uT^{-jm}U_n$  ($U_n=U_{n,0}$) and similarly 
$U_{n,u}^j=T^{n+mu-j}U_{n,u}$ in the left $\phi$-mixing case and 
$U_{n,u}^j=A_j(U_{n,u})$ in the right $\phi$-mixing case.
 In particular  $U_{n,u}\in\sigma(\mathcal{A}^{n+mu})$
and  $U_n^j, U_{n,u}^j\in\sigma(\mathcal{A}^{j})$.

We say $U_n, n=1,2,\dots$ is an {\em adapted neighbourhood system at $x$} if
(we use $\mathcal{O}^*$ to indicate a term whose implied constant is equal to $1$):
\begin{enumerate}
\item $U_n\in\sigma(\mathcal{A}^n)$;

\item $U_{n+1}\subset U_n$;

\item $\bigcap_{n}U_n=\{x\}$;

\item There exists $\gamma'>0$ so that $\mu(U_n^j)\lesssim j^{-\gamma'}, \forall j\le Kn$ if $x$ is 
non-periodic, and $\mu(U_{n,u}^j)\lesssim j^{-\gamma'}\,\forall j\le K(n+um)$ if $x$ is periodic
with minimal period $m$,
 for some $K\in(0,1]$.

\item If $x$ is periodic with minimal period $m$ then there exist $J(n)\in(0,1)$ so that
$J(n)n\to\infty$ as $n\to\infty$ and
 $\mu\!\left(\bigcap_{j=0}^kT^{-i_j}U_n\right)=\mu(U_{n,i_k/m})(1+\mathcal{O}^*(r_{n}))$
 for all $\vec{i} = (i_0,\dots,i_k)$ and all $k$, where $0=i_0<i_1<i_2<\cdots<i_k$, multiples of $m$,
  satisfy $i_k\le Jn$.
 The error term $r_{n}$ goes to zero as $n\to\infty$. 
 
\end{enumerate}

\vspace{2mm}
\begin{remark}
Observe that the $n$-cylinders $U_n=A_n(x)$ satisfy  conditions~(1)--(5).
In Property~(5), if we have $i_0 \ne 0$ then by invariance of $\mu$, this property still hold with $U_{n,(i_k-i_0)/m}$ instead of $U_{n,i_k/m}$. 
\end{remark}

\begin{thm} \label{escape.phi}
Let $\mu$ be a (left or right) $\phi$-mixing measure with respect to a generating 
partition $\mathcal{A}$ so that $\phi$ decays at least polynomially with a power
$p$.

Let $\{U_n\in\sigma(\mathcal{A}^n):\,n=1,2,\dots\}$ be an adapted neighbourhood
system satisfying Properties~(1) to~(5).  We consider the following two cases:

\noindent {\bf (I)} (polynomial case)  Assume there exist 
$1<\gamma'<\gamma''$ so that $n^{-\gamma''}\lesssim\mu(U_n)\lesssim n^{-\gamma'}$,
 in Property~(5)  $J(n)\gtrsim n^{-(1-\zeta)}$ for some 
$\zeta\in(0,1]$ and $\gamma'$ is the same as in Property~(4).
Assume $p> \max\left\{2,\frac{2\gamma''}{\zeta\gamma'}-1\right\}$.

\noindent {\bf (II)} (exponential case) Assume there exist  $0<\xi_1<\xi_2<1$
so that $\xi_1^n\lesssim\mu(U_n)\lesssim\xi_2^n$,  in Property~(5) $J(n)=J\in(0,1)$ 
is a constant and in Property~(4) it is sufficient to have $\gamma'>1$.
Assume $p>\frac8J\frac{\log\xi_1}{\log\xi_2}-1$.

Then
$$
\rho(x)=\lim_{n\to\infty}\frac{\rho_{U_n}}{\mu(U_n)}
=1-\vartheta(x)
$$
provided the limit 
$\vartheta(x)= \lim_{n\to\infty}\frac{\mu(U_{n}\cap T^{-m}U_n)}{\mu(U_n)}<\frac12$ exists if 
$x$ is periodic with minimal period $m$ (and $\vartheta(x)=0$ if $x$ is non-periodic).
\end{thm}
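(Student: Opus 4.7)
\emph{Overall strategy.} The plan is to show that for each large $n$,
$$
-\log\mathbb{P}(\tau_{U_n}>t) = (1-\vartheta(x))\,\mu(U_n)\,t\,(1+\varepsilon_{n,t})
$$
with $\varepsilon_{n,t}\to 0$ as $n\to\infty$ uniformly in a suitable range of $t$, and then to send $t\to\infty$ (to identify $\rho_{U_n}$) followed by $n\to\infty$ (to divide by $\mu(U_n)$). The main device is the two-scale blocking argument in the Galves--Schmitt / Abadi tradition: partition $\{1,\dots,t\}$ into $K=\lfloor t/(\Delta+g)\rfloor$ alternating \emph{work} blocks of length $\Delta=\Delta_n$ and \emph{gap} blocks of length $g=g_n$, so that entries to $U_n$ in different work blocks become effectively independent through left $\phi$-mixing over the intervening gaps.

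\emph{Between-block step.} Set $E_\ell=T^{-(\ell-1)(\Delta+g)}\{\tau_{U_n}>\Delta\}$. Replacing the reference to $U_n$ inside each $E_\ell$ by its outer $j$-cylinder approximation $U_n^j\in\sigma(\mathcal{A}^j)$ puts each $E_\ell$ into a cylinder algebra; iterating the left $\phi$-mixing inequality across the $K$ blocks then delivers
$$
\mathbb{P}\Bigl(\bigcap_{\ell=1}^{K}E_\ell\Bigr) = \mathbb{P}(\tau_{U_n}>\Delta)^{K} + \mathcal{O}\!\bigl(K\phi(g-j)\bigr) + \mathcal{O}\!\bigl(K\Delta\bigl(\mu(U_n^j)-\mu(U_n)\bigr)\bigr),
$$
with Property~(4), $\mu(U_n^j)\lesssim j^{-\gamma'}$, keeping the last error small.

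\emph{Per-block step.} For a single work block of length $\Delta\ll\mu(U_n)^{-1}$, inclusion--exclusion reads
$$
\mathbb{P}(\tau_{U_n}\le\Delta) = \Delta\mu(U_n) - \sum_{1\le i<i'\le\Delta}\mu\bigl(U_n\cap T^{-(i'-i)}U_n\bigr) + \cdots .
$$
For non-periodic $x$, Property~(4) together with $\phi$-mixing makes all pairwise and higher terms of lower order, giving $\mathbb{P}(\tau_{U_n}\le\Delta)\approx\Delta\mu(U_n)$ and hence $\rho_{U_n}\approx\mu(U_n)$. For periodic $x$ with period $m$, Property~(5) pins down the ``cluster'' measures $\mu(\bigcap_{j=0}^{u}T^{-jm}U_n) = \mu(U_{n,u})$ and shows that these dominate every other arrangement of increasing times $(i_0<\dots<i_u)$. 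Reorganising the inclusion--exclusion sum by cluster length produces an alternating series in $\vartheta(x)$ which, under the hypothesis $\vartheta(x)<\tfrac12$, converges absolutely and collapses to the correction factor $1-\vartheta(x)$; the non-periodic cross-terms (where some $i_{j+1}-i_j$ is not a multiple of $m$) are lower-order through $\phi$-mixing against a shifted cylinder approximation.

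\emph{Main obstacle and conclusion.} Combining the two steps, $|\log\mathbb{P}(\tau_{U_n}>t)|/t$ approaches $(1-\vartheta(x))\mu(U_n)\Delta/(\Delta+g)$, so the real work is to schedule the three parameters $\Delta_n, g_n, j_n$ so that simultaneously (i) the mixing cost $K\phi(g-j)$ is negligible compared with $K(1-\vartheta)\mu(U_n)\Delta$, (ii) the outer-approximation cost $K\Delta\mu(U_n^j)$ is negligible, and (iii) the higher-order in-block contributions are $o(\Delta\mu(U_n))$. This three-way juggling is precisely where the decay rate $p$ of $\phi$ enters: in the polynomial case~(I), the lower bound $p>\max\{2,2\gamma''/(\zeta\gamma')-1\}$ is exactly what allows $\phi(g-j)$ to outrun $\mu(U_n)^{-1}$ when $\Delta, g, j$ are all powers of $n$ governed by Properties~(4) and~(5); in the exponential case~(II), the analogous role is played by $p>8J^{-1}\log\xi_1/\log\xi_2-1$. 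Once all errors are uniformly $o((1-\vartheta)\mu(U_n)t)$, sending $t\to\infty$ produces $\rho_{U_n}=(1-\vartheta(x))\mu(U_n)(1+o(1))$, and dividing by $\mu(U_n)$ and letting $n\to\infty$ yields the claimed identity $\rho(x)=1-\vartheta(x)$.
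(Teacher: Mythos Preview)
Your overall architecture---blocking time, using $\phi$-mixing across gaps for approximate independence, and extracting $1-\vartheta$ from a short-block inclusion--exclusion---matches the paper's. The genuine gap is in the between-block iteration. You write
\[
\mathbb{P}\Bigl(\bigcap_{\ell=1}^{K}E_\ell\Bigr)=\mathbb{P}(\tau_{U_n}>\Delta)^K+\mathcal{O}\bigl(K\phi(g-j)\bigr)+\cdots,
\]
but $\rho_{U_n}$ is defined as a limit $t\to\infty$ for \emph{fixed} $n$, so $K\to\infty$. Then $\mathbb{P}(\tau_{U_n}>\Delta)^K$ decays exponentially in $K$ while the additive error $K\phi(g-j)$ diverges linearly; for all large $K$ the error swamps the main term and no information about $-\frac1t\log\mathbb{P}(\tau_{U_n}>t)$ survives. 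An additive error of fixed size in a tail probability is useless once the tail has decayed past it.

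The paper repairs this with a sharper one-step estimate (Lemma~\ref{product.entry.time}):
\[
\bigl|\mathbb{P}(\tau_U>s+t)-\mathbb{P}(\tau_U>t)\mathbb{P}(\tau_U>s)\bigr|\le \delta\,\mathbb{P}(\tau_U>t-\Delta),
\qquad \delta=2\bigl(\Delta\mu(U)+\phi(\Delta-n)\bigr).
\]
The factor $\mathbb{P}(\tau_U>t-\Delta)$ on the right is the key: it makes the error at each step comparable to the quantity being iterated, so errors accumulate \emph{multiplicatively} rather than additively. A careful induction (with a fractional-step device to absorb the shift $t\mapsto t-\Delta$) then gives $\mathbb{P}(\tau_U>ks)=\bigl(\mathbb{P}(\tau_U>s)+\mathcal{O}^*(\delta^\eta)\bigr)^{k-2}$ with $\eta=q/(q+1)$, $q=s/\Delta$, from which $k\to\infty$ yields $\rho_{U_n}=\frac1s\bigl(\mathbb{P}(\tau_{U_n}\le s)+\mathcal{O}(\delta^\eta)\bigr)$ cleanly. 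The thresholds on $p$ in Cases~(I) and~(II) are precisely what force $\delta_n^{\eta_n}/(s(n)\mu(U_n))\to 0$. Two smaller remarks: the outer approximations $U_n^j$ are not needed in the between-block step, since $\{\tau_{U_n}>t\}\in\sigma(\mathcal{A}^{t+n})$ already and one mixes directly with gap $\Delta-n$; they enter only the short-time Lemmata~\ref{lemma.nonperiodic} and~\ref{lemma.periodic}. And for non-periodic $x$ the paper obtains $\mathbb{P}(\tau_{U_n}\le s)\sim s\mu(U_n)$ via a Cauchy--Schwarz second-moment bound on the hit count $N_s$, not by inclusion--exclusion.
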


\begin{remark} If $T$ is invertible, then one has to define the $n$-join by
$\mathcal{A}^n=\bigvee_{j=[n/2]}^{[n/2]+n-1}T^{-j}\mathcal{A}$
and the statement of the theorem applies. The proof in that case requires only
minor adjustments.
\end{remark}

\begin{remark} The value $1-\vartheta(x)$ is often referred to as the extremal index.
\end{remark}

\begin{remark}
Note that $\frac{2\gamma''}{\zeta\gamma'}$ is always larger than $2$.
In the exponential case the lower bound for $p$ can be improved to the condition
$p>2\frac{1+J}J\frac{\log\xi_1}{\log\xi_2}-1$. For this see the remark at the end of the
proof of Lemma~\ref{lemma.periodic}. When the measure of $U_n$ is stretch exponential in $n$, a similar result can be obtained using the same method. 
\end{remark}

\begin{remark} The condition that $\vartheta<\frac12$ in the periodic case could well
be a  artifact of the proof which requires the convergence of a geometric
series in $\frac\vartheta{1-\vartheta}$. However, if $x$ is a periodic point of $T$,
then passing to an iterate $\hat{T}=T^p$ 
for some suitable integer $p$ will achieve that $x$ is still periodic for $\hat{T}$
and the new value of $\vartheta$ for the
map $\hat{T}$ will satisfy the requirement of being less than $\frac12$.
\end{remark}

 The existence of the limit defining $\vartheta$ for a period point with 
period $m$ implies that 
 $\lim_{n\to\infty}\frac{\mu(U_{n,u})}{\mu(U_n)}=\vartheta^u$. 
 Let us note that for $\psi$-mixing systems the limit 
 $\lim_{u\to\infty}\left(\frac{\mu(U_{n,u})}{\mu(U_n)}\right)^\frac1u$ always exists,
 is independent of $n$ and equals $\vartheta$ (see~\cite{HV09}).
 
\section{Proof of Theorem~\ref{escape.phi}}
 
 \noindent Denote by $\tau(U)=\min\{j\ge1: T^{-j}U\cap U\not=\varnothing\}$ the {\em period} of $U$.

 \begin{lemma} Let $\mathcal{A}$ be a (finite) generating partition of $\Omega$.
 Let $U_n\in\sigma(\mathcal{A}^n)$, $n=1,2,\dots$, so that $U_{n+1}\subset U_n\,\forall n$
 and $\bigcap_nU_n=\{x\}$.
 
 Then the sequence $\tau(U_n)$, $n=1,2,\dots$, is bounded if and only if
 $x$ is a periodic point.
 \end{lemma}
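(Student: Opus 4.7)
The easy direction is immediate: if $x$ is periodic with minimal period $m$, then $T^m x = x$ together with $x\in U_n$ gives $x\in U_n\cap T^{-m}U_n$, so $\tau(U_n)\le m$ for every $n$. For the converse, the inclusions $U_{n+1}\subset U_n$ force $\tau(U_{n+1})\ge\tau(U_n)$, so a bounded sequence $\tau(U_n)$ is non-decreasing and therefore eventually constant, equal to some $M$ for all $n\ge n_0$. For each such $n$ I pick $y_n\in U_n\cap T^{-M}U_n$, so that both $y_n$ and $T^My_n$ lie in $U_n$. The plan is then to upgrade the statement ``$y_n$ and $T^My_n$ both lie in arbitrarily small neighbourhoods of $x$'' into $T^Mx=x$ by means of the generating property of $\mathcal{A}$.

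The main step is the auxiliary claim that for every fixed $N$ one has $U_n\subset A_N(x)$ for all $n$ sufficiently large. Granting this, pick $n$ large enough that $U_n\subset A_{N+M}(x)$. Then $y_n\in A_{N+M}(x)$ means $y_n$ and $x$ lie in the same atom of $\mathcal{A}$ at each time $j=0,1,\dots,N+M-1$, which forces $T^My_n\in A_N(T^Mx)$. But we also have $T^My_n\in U_n\subset A_N(x)$, and since the atoms of $\mathcal{A}^N$ are pairwise disjoint this yields $A_N(T^Mx)=A_N(x)$. Since $N$ is arbitrary and $\mathcal{A}$ is generating, $T^Mx=x$, so $x$ is periodic.

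For the auxiliary claim, consider $V_n^N=\{C\in\mathcal{A}^N:\,C\cap U_n\ne\varnothing\}$, a subset of the finite collection $\mathcal{A}^N$. It is non-increasing in $n$ (because $U_n$ decreases), hence stabilises at some $V^N$, and clearly $A_N(x)\in V^N$. I claim $V^N=\{A_N(x)\}$. Otherwise there would be a cylinder $C\in V^N$ with $C\ne A_N(x)$, and $C\cap U_n$ would be a nested sequence of nonempty sets (each a finite union of cylinders) with $\bigcap_n(C\cap U_n)=C\cap\{x\}=\varnothing$, since $x\notin C$. This contradicts the fact that a decreasing sequence of nonempty finite unions of cylinders in the ambient (compact symbolic) setting has nonempty intersection. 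I expect this compactness/finite-intersection step to be the only subtle point of the argument; everything else is elementary combinatorics with the generating partition.
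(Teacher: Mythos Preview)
Your proof is correct and hinges on the same compactness ingredient as the paper, but you take a longer route. The paper observes directly that the sets $U_n\cap T^{-\tau_\infty}U_n$ are nested, nonempty, and closed (as finite unions of cylinders), so by compactness their intersection is nonempty; since this intersection equals $\bigcap_nU_n\cap\bigcap_nT^{-\tau_\infty}U_n=\{x\}\cap T^{-\tau_\infty}\{x\}$, one gets $T^{\tau_\infty}x=x$ in one stroke. You instead first establish the auxiliary claim that $U_n\subset A_N(x)$ for all large $n$ (this statement is in fact proven as a separate lemma later in the paper, where it is needed for the computation of $\vartheta$ on subshifts), and then argue cylinder by cylinder that $A_N(T^Mx)=A_N(x)$ for every $N$. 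Your route has the merit of making explicit what ``$U_n$ shrinks to $\{x\}$'' means at the level of cylinders, and the auxiliary claim is independently useful; the paper's argument is shorter because it bypasses the cylinder bookkeeping and applies the finite-intersection property directly to the sets $U_n\cap T^{-\tau_\infty}U_n$ themselves.
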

 
\begin{proof} Let us put $\tau_n=\tau(U_n)$ and notice that $\tau_{n+1}\ge \tau_n$
 for all $n$. Thus either $\tau_n\to\infty$ or $\tau_n$ has a finite limit $\tau_\infty$. 
 Assume $\tau_n\to\tau_\infty<\infty$. Then $\tau_n=\tau_\infty$ for all $n\ge N$, for some $N$,
 and thus $U_n\cap T^{-\tau_\infty}U_n\not=\varnothing$ for all $n\ge N$.
 Since the intersections $U_n\cap T^{-\tau_\infty}U_n$ are nested, i.e.\
 $U_{n+1}\cap T^{-\tau_\infty}U_{n+1}\subset U_n\cap T^{-\tau_\infty}U_n$, 
 for all $n\ge N$, we get
 $$
\varnothing\not=\bigcap_{n\ge N}(U_n\cap T^{-\tau_\infty}U_n)
 =\bigcap_{n\ge N}U_n\cap\bigcap_{n\ge N} T^{-\tau_\infty}U_n
 =\{x\}\cap\{T^{-\tau_\infty}x\}
 $$ 
 which implies that $x=T^{\tau_\infty}x$ is a periodic point.
 Conversely, if $x $ is periodic then clearly the $\tau_n$ are bounded by its period.
\end{proof}

\noindent Before we embark on the proof of the theorem, let us prove the following 
two lemmata about non-periodic points and periodic points.

\begin{lemma}\label{lemma.nonperiodic}~\cite{AV09}
Let $\mu$ be $\phi$-mixing with $\phi(k)=\mathcal{O}(k^{-p})$ for some $p>1$.
Let $U_n\in\sigma(\mathcal{A}^n)$, $n=1,2,\dots$, be sequence of sets
so that $\mu(U_n^j)=\mathcal{O}(j^{-\gamma'})$ for $j\le Kn$ for some $K\in(0,1)$ and $\gamma'>1$.

If $\tau(U_n)\to\infty$, then
$$
\frac{\mathbb{P}(\tau_{U_n}\le s_n)}{s_n\mu(U_n)}\to1^-
$$
for any sequence $s_n, n=1,2,\dots$ which satisfies $s_n\mu(U_n)\to0$.
\end{lemma}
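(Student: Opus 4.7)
The plan is to sandwich $\mathbb{P}(\tau_{U_n}\le s_n)$ between the first two Bonferroni bounds. The union bound and $T$-invariance immediately give $\mathbb{P}(\tau_{U_n}\le s_n)\le s_n\mu(U_n)$, which upper-bounds the ratio by $1$ and explains the ``$\to 1^{-}$'' qualifier. For the matching lower bound the second Bonferroni inequality yields
$$
\mathbb{P}(\tau_{U_n}\le s_n)\ge s_n\mu(U_n)-\sum_{j=1}^{s_n-1}(s_n-j)\,\mu(U_n\cap T^{-j}U_n),
$$
so it suffices to prove that
$$
S_n:=\sum_{j=\tau(U_n)}^{s_n-1}\mu(U_n\cap T^{-j}U_n)=o(\mu(U_n)),
$$
because then the error is bounded by $s_n S_n=o(s_n\mu(U_n))$. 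The hypothesis $\tau(U_n)\to\infty$ is used only to truncate the sum below $\tau(U_n)$, since smaller indices contribute $0$ by definition of the period.

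To estimate $S_n$ I will split the range of $j$ into three regimes tied to the cylinder length $n$. For \emph{small} $j$ with $\tau(U_n)\le j\le 2Kn$ I apply the outer cylinder approximation $U_n\subset T^{-(n-j')}U_n^{j'}$ with $j'=\lfloor j/2\rfloor$, combined with left $\phi$-mixing at gap $j-j'\ge j/2$, to obtain
$$
\mu(U_n\cap T^{-j}U_n)\le\mu(U_n)\bigl(\mu(U_n^{j'})+\phi(j-j')\bigr)\lesssim\mu(U_n)\bigl(j^{-\gamma'}+j^{-p}\bigr);
$$
the tail sum is $\mu(U_n)\cdot O(\tau(U_n)^{1-\min(\gamma',p)})=o(\mu(U_n))$. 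For \emph{intermediate} $j$ with $2Kn<j\le 2n$ I cap the approximation at the largest allowed scale $j'=\lfloor Kn\rfloor$, keeping $\mu(U_n^{j'})\lesssim n^{-\gamma'}$ and the mixing gap at least $\gtrsim n$; summing over such $j$ costs at most $\mu(U_n)\cdot O(n^{1-\gamma'}+n^{1-p})$. For \emph{large} $j$ with $j>2n$ the gap $j-n$ is already $\ge n$, so I can dispense with the approximation and apply $\phi$-mixing directly to $U_n$, giving $\mu(U_n\cap T^{-j}U_n)\le\mu(U_n)^2+\phi(j-n)\mu(U_n)$; summing produces $s_n\mu(U_n)^2+\mu(U_n)\sum_{k\ge n}\phi(k)$, both of which are $o(\mu(U_n))$ in view of $s_n\mu(U_n)\to 0$ and $p>1$.

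The main obstacle I anticipate is the intermediate regime. There the approximation parameter $j'$ is capped by Property~(4), so the per-term decay stalls at $n^{-\gamma'}$ over a range of length $\sim n$; the total contribution $n^{1-\gamma'}$ vanishes precisely because $\gamma'>1$, which is the reason the hypothesis on $\mu(U_n^j)$ has to go beyond what bare $\phi$-mixing provides. Everything else reduces either to $T$-invariance or to a tail estimate for $\phi$ exploited through $p>1$. Combining the three regimes yields $S_n=o(\mu(U_n))$, hence the Bonferroni defect is $o(s_n\mu(U_n))$ and the lower bound matches the upper bound, proving $\mathbb{P}(\tau_{U_n}\le s_n)/(s_n\mu(U_n))\to 1^{-}$.
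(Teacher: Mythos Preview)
Your argument is correct and, at the level of the core estimate, essentially the same as the paper's: both reduce to showing that
\[
\sum_{j=\tau(U_n)}^{s_n}\mu(U_n\cap T^{-j}U_n)=o(\mu(U_n)),
\]
and both split this sum into the same three ranges ($j\le 2Kn$, $2Kn<j\le 2n$, $j>2n$), using the outer approximation $U_n^{[j/2]}$ for short gaps and direct $\phi$-mixing for long gaps.

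The one genuine difference is in the \emph{wrapper} for the lower bound. You use the second Bonferroni inequality
\[
\mathbb{P}(\tau_{U_n}\le s_n)\ge s_n\mu(U_n)-\sum_{j=1}^{s_n-1}(s_n-j)\,\mu(U_n\cap T^{-j}U_n),
\]
whereas the paper uses the second-moment (Paley--Zygmund type) bound: with $N_s=\sum_{j=1}^{s}\chi_{U_n}\circ T^j$ one has by Cauchy--Schwarz
\[
\mathbb{P}(\tau_{U_n}\le s)\ge\frac{(s\mu(U_n))^2}{\mu(N_s^2)},\qquad
\mu(N_s^2)=s\mu(U_n)+2\sum_{j=1}^{s}(s-j)\,\mu(U_n\cap T^{-j}U_n).
\]
Both routes feed on exactly the same pair-correlation sum, and once that sum is shown to be $o(s_n\mu(U_n))$ either inequality yields the ratio $\to 1^-$. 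Your Bonferroni version is slightly more direct (no auxiliary counting variable, no Cauchy--Schwarz), while the paper's second-moment version is the standard device from \cite{AV09} and has the marginal advantage that the lower bound stays nonnegative automatically even before one checks that the error is small.
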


\begin{proof} Let us drop the index $n$ and assume $U\in\sigma(\mathcal{A}^n)$.
One has the simple upper bound 
$\mathbb{P}(\tau_U\le s)= \mu\!\left(\bigcup_{j=1}^sT^{-j}U\right)\le s\mu(U)$.
In order to find a lower bound put $N_s=\sum_{j=1}^s\chi_U\circ T^j$ for the 
counting function for hitting $U$ up to time $s$. Clearly $\{\tau_U\le s\}=\{N_s\ge1\}$
and also $\mathbb{E}(N_s)=\mu(N_s)=s\mu(U)$. By Cauchy-Schwarz
$$
\left(\mu(N_s)\right)^2=\left(\mu(N_s\chi_{N_s\ge1})\right)^2
\le\mu(N_s^2)\mu(\chi_{N_s\ge1}^2)=\mu(N_s^2)\mathbb{P}(\tau_U\le s).
$$
Thus 
$$
\mathbb{P}(\tau_U\le s)\ge\frac{s^2\mu(U)^2}{\mu(N_s^2)},
$$
where we can write
$$
\mu(N_s^2)=\mu\!\left(\sum_{j=1}^s\chi_U\circ T^j\right)^2
=s\mu(U)+2\sum_{j=1}^s(s-j)\mu(\chi_U(\chi_U\circ T^j)).
$$
For $j=1,\dots,\tau(U)-1$ the terms in the sum are zero. 
For $j=\tau(U),\dots, 2n$ we use $U^{[j/2]}\in\sigma(\mathcal{A}^{[j/2]})$ smallest so that 
$T^{n-[j/2]}U\subset U^{[j/2]}$. Then by the $\phi$-mixing property 
$$
\mu(\chi_U(\chi_U\circ T^j))\le\mu(U\cap T^{-(j+n-[j/2])}U^{[j/2]})
\le\mu(U)(\mu(U^{[j/2]})+\phi(j/2)).
$$
For $j=2n+1,\dots,s$ the left $\phi$-mixing property yields
$$
\mu(\chi_U(\chi_U\circ T^j))=\mu(U\cap T^{-j}U)
\le\mu(U)(\mu(U)+\phi(j-n))\le\mu(U)(\mu(U)+\phi(j/2)).
$$
Hence (if $Kn<\tau(U)$ then the first sum in the second estimate
is equal to zero)
\begin{eqnarray*}
\mu(N_s^2)
&\le& s\mu(U)\!\left(1+2\sum_{j=\tau(U)}^{2n}\mu(U^{[j/2]})+\sum_{j=2n+1}^s\mu(U)
+\sum_{j=\tau(U)}^\infty\phi(j/2)\right)\\
&\le& s\mu(U)\!\left(1
+c_1\!\left(\sum_{j=\tau(U)}^{Kn} j^{-\gamma'}+\sum_{j=Kn+1}^{2n}n^{-\gamma'}\right)
+s\mu(U)
+c_2\sum_{j=\tau(U)}^\infty(j/2)^{-p}\right)\\
&\le& s\mu(U)\!\left(1+c_3\tau(U)^{-\gamma'+1}+s\mu(U)
+c_4\tau(U)^{-p+1}\right)
\end{eqnarray*}
where we used the estimate $\mu(U^{[j/2]})\le C[j/2]^{-\gamma'}$ (if $[j/2]\le Kn$
otherwise we use the upper bound $(Kn)^{-\gamma'}$) and that $\phi(k)\sim k^{-p}$.
Consequently
\begin{eqnarray*}
\frac{\mathbb{P}(\tau_U\le s)}{s\mu(U)}
&\ge&\frac1{1+c_3\tau(U)^{-\gamma'+1}+s\mu(U)
+c_4\tau(U)^{-p+1}}
\end{eqnarray*}
and therefore $\frac{\mathbb{P}(\tau_U\le s)}{s\mu(U)}\to1^-$ as $\gamma'>1$
 if we let $s\mu(U)\to 0$ and $\tau(U)\to\infty$, provided the limit defining $\vartheta$
 exists.
\end{proof}

\begin{lemma} \label{lemma.periodic}
Let $\mu$ be a $\phi$-mixing measure where $\phi$ decays at least polynomially
with a power $p>2$.

Let $x$ be a periodic point with minimal period $m$
and $\{U_n:n\}$ be an adapted neighbourhood system
with $J(n)$ so that $J(n)n\to\infty$ as $n\to\infty$,
then, if the limit $\vartheta$ exists and is less than $\frac12$,
$$
\lim_{n\to\infty}\frac{\mathbb{P}(\tau_{U_n}\le s(n))}{s(n)\mu(U_n)}=1-\vartheta
$$
for any sequence $s(n)\to\infty$ which satisfies $s(n)=o(\mu(U_n^\frac{J(n)n}4)^{\frac{1}2})$ and 
$s(n)\mu(U_n)\to0$ (recall $U_n^\frac{J(n)n}4\in\sigma(\mathcal{A}^\frac{J(n)n}4)$).
\end{lemma}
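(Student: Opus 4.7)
The plan is to sandwich the ratio $\mathbb{P}(\tau_{U_n}\le s(n))/(s(n)\mu(U_n))$ between matching bounds both tending to $1-\vartheta$. For the upper bound, I would decompose $\{\tau_{U_n}\le s\}=\bigsqcup_{j=1}^s\{\tau_{U_n}=j\}$ and note that for $j>m$ the event $\{\tau_{U_n}=j\}$ is contained in $T^{-j}U_n\cap T^{-(j-m)}U_n^c$, a set of measure $\mu(U_n)-\mu(U_{n,1})=(1-\vartheta+o(1))\mu(U_n)$ by $T$-invariance and Property~(5). Summing then dividing by $s\mu(U_n)$ produces $\limsup\le 1-\vartheta$, the $m\mu(U_n)/s$ boundary contribution vanishing because $s\to\infty$.

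For the lower bound, the strategy is to delete the periodic cluster at $x$ so that the effective set behaves aperiodically, and then apply the Cauchy--Schwarz/second-moment scheme of Lemma~\ref{lemma.nonperiodic}. I would set
\[
V_n:=U_n\cap\bigcap_{k=1}^{K_n}T^{-km}U_n^c,
\]
with $K_n\to\infty$ slowly and $K_nm\le J(n)n$. Two facts about $V_n$ are needed. \emph{(a)} $\mu(V_n)=(1-\vartheta+o(1))\mu(U_n)$, established by inclusion--exclusion on $\mu(U_n\cap\bigcup_{k=1}^{K_n}T^{-km}U_n)$: Property~(5) collapses each term to $\mu(U_{n,\max S})(1+\mathcal{O}(r_n))$, and grouping the alternating sum by $\max S=k^*$ reduces the combinatorial factor to $\sum_{T\subset\{1,\dots,k^*-1\}}(-1)^{|T|}=(1-1)^{k^*-1}$, which vanishes for $k^*\ge 2$ and equals $1$ for $k^*=1$, leaving only the clean leading term $\mu(U_{n,1})$; the accumulated $\mathcal{O}(r_n)$ errors are controlled by $r_n\mu(U_n)\sum_{k^*}(2\vartheta)^{k^*}$, a geometric series whose convergence requires exactly $\vartheta<\tfrac12$. \emph{(b)} $\tau(V_n)\to\infty$, proved by pigeonhole: for $i=lm$ with $l\le K_n$ the set $V_n\cap T^{-lm}V_n$ is empty by construction, while for each fixed $i$ not a multiple of $m$ a nested-selection plus continuity argument (mirroring the first lemma of this section) would otherwise force $T^ix=x$, contradicting the minimality of $m$.

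Since $V_n\subset U_n$, $\{\tau_{U_n}\le s\}\supset\{N_s^V\ge 1\}$ for $N_s^V:=\sum_{j=1}^s\chi_{V_n}\circ T^j$, so Cauchy--Schwarz gives
\[
\mathbb{P}(\tau_{U_n}\le s)\ge\frac{(s\mu(V_n))^2}{\mathbb{E}(N_s^V)^2}=\frac{(s\mu(V_n))^2}{s\mu(V_n)+2\sum_{i\ge 1}(s-i)\mu(V_n\cap T^{-i}V_n)}.
\]
I would split the covariance into three ranges. For $i=lm$ with $l\le K_n$ the terms vanish. For $l>K_n$, the bound $\mu(V_n\cap T^{-lm}V_n)\le\mu(U_n\cap T^{-lm}U_n)\approx\vartheta^l\mu(U_n)$ (from Property~(5) within its range and $\phi$-mixing beyond) totals $\mathcal{O}(\vartheta^{K_n}\mu(U_n))$. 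For $i$ not a multiple of $m$, the Lemma~\ref{lemma.nonperiodic} estimate $\mu(V_n\cap T^{-i}V_n)\le\mu(U_n\cap T^{-i}U_n)\le\mu(U_n)(\mu(U_n^{[i/2]})+\phi(i/2))$ applies verbatim; with $\mu(U_n^{[i/2]})\lesssim(i/2)^{-\gamma'}$ from Property~(4) and $\tau(V_n)\to\infty$, summing from $i=\tau(V_n)$ gives $\mu(U_n)(\tau(V_n)^{1-\gamma'}+\tau(V_n)^{1-p})=o(\mu(U_n))$, while the direct $\phi$-mixing tail contributes $\mathcal{O}(s\mu(U_n)^2+\mu(U_n))$ which is $o(s\mu(U_n))$ because $s\mu(U_n)\to 0$. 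Combining, $\mathbb{E}(N_s^V)^2=s\mu(V_n)(1+o(1))$, and the Cauchy--Schwarz bound yields $\mathbb{P}(\tau_{U_n}\le s)\ge s\mu(U_n)(1-\vartheta)(1-o(1))$, matching the upper bound.

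The main obstacle is the combinatorial bookkeeping in \emph{(a)}: the $\mathcal{O}(r_n)$ corrections from Property~(5) over all $2^{K_n}-1$ subsets must genuinely sum to $o(\mu(U_n))$, and this is precisely where the hypothesis $\vartheta<\tfrac12$ cannot be dropped. The slow choice of $K_n$ must simultaneously keep $K_nm\le J(n)n$ (for Property~(5)) and make $\vartheta^{K_n}$ negligible in the covariance, while the size condition on $s(n)$ is what controls the cross-term $s^2\mu(U_n)^2$ appearing in the Cauchy--Schwarz denominator.
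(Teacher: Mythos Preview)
Your approach is correct and genuinely different from the paper's. The paper attacks $\mathbb{P}(\tau_{U_n}\le s)$ head-on via the full Bonferroni expansion $\sum_{\ell}(-1)^{\ell}M_{\ell+1}$, parsing each multi-index $\vec{i}$ into clusters of short (periodic) returns separated by long gaps; the single-cluster ``principal'' terms sum to the geometric series $\sum_{\ell}(-1)^\ell(\vartheta/(1-\vartheta))^\ell=1-\vartheta$, and the multi-cluster remainder is killed by $\phi$-mixing across the gaps, which is precisely where the hypothesis $s(n)=o(\mu(U_n^{J n/4})^{-1/2})$ enters. Your sandwich bypasses most of this machinery: the upper bound via $\{\tau_{U_n}=j\}\subset T^{-j}U_n\cap T^{-(j-m)}U_n^c$ is a one-line observation, and the lower bound cleverly manufactures an aperiodic set $V_n$ so that Lemma~\ref{lemma.nonperiodic} can be recycled verbatim (bounding $\mu(V_n\cap T^{-i}V_n)\le\mu(U_n\cap T^{-i}U_n)$ throughout). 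Two remarks. First, your covariance bound actually only uses $s\mu(U_n)\to0$, $\tau(V_n)\to\infty$, $\gamma'>1$ and $p>1$; the extra growth restriction on $s(n)$ in the lemma's statement never appears in your argument, so you have in fact proved something slightly stronger than stated---your closing sentence attributing control of $s^2\mu(U_n)^2$ to that hypothesis is inaccurate, since $s\mu(U_n)\to0$ already suffices. Second, your three-way split of the covariance (multiples $l\le K_n$, multiples $l>K_n$, non-multiples) is more bookkeeping than necessary: once $\tau(V_n)\to\infty$ is established, the two-range split of Lemma~\ref{lemma.nonperiodic} applied to $\mu(U_n\cap T^{-i}U_n)$ covers everything uniformly. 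What the paper's heavier combinatorics buys is an explicit remainder of the form $s\mu(U_n)(\phi^2(Jn/4)+s^2\mu(U_n^{\eta n}))$, which feeds directly into the quantitative choice of $s(n)$ in the proof of Theorem~\ref{escape.phi}; your approach yields different (and in some respects cleaner) error terms.
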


\begin{proof}
Now let $x$ be a periodic point with minimal period $m$.
By Bonferroni (also called the inclusion-exclusion formula due 
to de Moivre~\cite{deM}\footnote{thanks to M Rychlik for pointing out this reference})
$$
\mathbb{P}(\tau_{U_n}\le s)=\mu\!\left(\bigcup_{j=1}^s T^{-j}U_n\right)
=\sum_{j=1}^s\mu(T^{-j}U_n)+\sum_{\ell=1}^{s-1}(-1)^{\ell}M_{\ell+1},
$$
where
$$
M_{\ell+1}=\sum_{\vec{i}\in I_{\ell+1}}\mu(C_{\vec{i}}),
$$
and 
$$
I_{\ell+1}(s)=\left\{\vec{i}=(i_0,i_1,\dots,i_{\ell})\in\mathbb{N}^{\ell}:
0\le i_0<i_1<\cdots<i_{\ell}\le s\right\}.
$$
Here we use the notation 
$$
C_{\vec{i}}=\bigcap_{j=0}^{\ell}T^{-i_j}U_n.
$$

We will split the summation on $M_l$ into two parts: the principal part $P_\ell$ consists of $\vec{i}$'s that has a single cluster of periodic overlaps, that is, $i_\ell-i_0 \le Jn$; the error part $R_\ell$ are those $\vec{i}$'s with more than one cluster. Here $J=J(n)$ is given by Property~(5). 

Then $M_{\ell+1}=P_{\ell+1}+R_{\ell+1}$,
where the error term is
$$
R_{\ell+1}=\sum_{k=2}^{\ell}\sum_{\vec{i}\in\tilde{\mathcal{B}}_k}\mu(C_{\vec{i}})
$$
and (with $j_1=0$)

\begin{eqnarray*}
\tilde{\mathcal{B}}_k&=&\left\{\vec{i}\in I_{\ell+1}: \exists j_1<j_2<\cdots<j_k 
\mbox{ so that } i_{j_{q+1}-1}-i_{j_q-1}>Jn,\: i_{j_{q+1}-1}-i_{j_q}\le Jn\right.\\
&&\hspace{4cm}\left. \forall q=1,\dots,k-1, \mbox{ where }  j_{k+1}-1=\ell\right\},
\end{eqnarray*}
where the index $k\ge2$ denotes the number of clusters and where $J=J(n)$ is as
in Property~(5).
In other words, we start from the right most position of intersection, $i_\ell$, 
and parse $\vec{i}$ into $k$ clusters, each of which has length roughly $Jn$. 
Note that $i_{j_q}$ and $i_{j_{q+1}-1}$ are the head  and tail of the $q$th cluster respectively, 
with $i_{j_{q+1}-1}-i_{j_q}\le Jn$ form the clusters of short returns which have periodic
behavior. Note that the tail of the $k$th cluster is located at $i_{j_{k+1}-1} = i_\ell$.

For the principal terms which are characterised by a single cluster, we also write
$$
\tilde{\mathcal{G}}_{\ell+1}(s)=\left\{\vec{i}\in I_{\ell+1}: i_\ell-i_{0}\le Jn\right\}
$$
and $\mathcal{G}_{\ell+1}=\{\vec{i}\in\tilde{\mathcal{G}}_{\ell+1}: i_0=0\}$.
Moreover, with $\mathcal{B}_k=\left\{\vec{i}\in\tilde{\mathcal{B}}_k: i_0=0\right\}$
we see that $\mathcal{G}^c=\bigcup_k{\mathcal{B}}_k$.
With the first position fixed we obtain by the invariance of $\mu$,
\begin{equation}\label{Rl+1.estimate}
R_{\ell+1}\le s\sum_{k=1}^{\ell}\sum_{\vec{i}\in{\mathcal{B}}_k}\mu(C_{\vec{i}}).
\end{equation}
Let $\vec{i}\in\mathcal{B}_k$ and $j_2-1,\dots,j_{k+1}-1$ the positions of the tails of clusters,
i.e.\ $i_{j_{q+1}-1}-i_{j_q-1}>Jn$. 
If we put $\mathcal{C}_q=\bigcap_{j=j_q}^{j_{q+1}-1}T^{-(i_j-i_{j_q})}U_{n}$ for the $q$-th cluster
then $$C_{\vec{i}}=\bigcap_qT^{-i_{j_q}}\mathcal{C}_q.$$
In order to find an upper bound for 
$\mu(C_{\vec{i}})$ we use that by Property~(5) for each cluster
$$
\mu\!\left(\mathcal{C}_q\right)=(1+o(1))\mu(U_{n,u_q})
$$
$q=1,\dots,k-1$, 
with $u_q=\frac1m(i_{j_{q+1}-1}-i_{j_q})$ denoting the number of periodic overlaps in the
$q$th cluster. Evidently $\mathcal{C}_q\in\sigma(\mathcal{A}^{n+mu_q})$. To obtain gaps between clusters in order to apply the $\phi$-mixing property, let  $\eta=\frac{J}4$ and define the set
$$
\mathcal{C}_q'=T^{(1-\eta)(n+mu_q)}\mathcal{C}_q,
$$
$q=2,\dots,k$. 
By definition
$\mathcal{C}_q'\in\sigma(\mathcal{A}^{n_q})$, where  $n_q=\eta(n+mu_q)$ for $q\ge2$
and 
$$
\mathcal{C}_q \subset T^{-(1-\eta)(n+mu_q)}\mathcal{C}'_q.
$$
For $q=1$ we put $\mathcal{C}_1'=\mathcal{C}_1$.
As a result
$$
C_{\vec{i}}\subset\mathcal{C}_1\cap\bigcap_{q=2}^kT^{-(i_{j_q}+(1-\eta)(n+mu_q))}\mathcal{C}_q'.
$$
(keep in mind that in $\mathcal{B}_k$ we have $i_{j_1} = i_0 = 0$).
In this way we obtain gaps of lengths (note that $T^{-(i_{j_q}+(1-\eta)(n+mu_q))}\mathcal{C}_q'$ starts at $i_{j_q}+(1-\eta)(n+mu_q)$):

\begin{align*}
\hat\Delta_q &=i_{j_{q+1}}+(1-\eta)(n+mu_{q+1}) - (i_{j_q}+(1-\eta)(n+mu_q)) - \eta(n+mu_q)\\
&= i_{j_{q+1}} - i_{j_q} + m(u_{q+1}-u_q) - \eta(n+ mu_{q+1})\\
& = i_{j_{q+2}-1} - i_{j_{q+1}-1}- \eta (n+ mu_{q+1}),
\end{align*}
which we can write
\begin{equation}\label{Delta.hat}
\hat\Delta_q=i_{j_{q+2}-1}-i_{j_{q+1}-1}-\eta n-2\eta mu_{q+1}+\eta mu_{q+1}
=\Delta_q+\eta mu_{q+1},
\end{equation}
where ($J=J(n)$)
$$
\Delta_q=i_{j_{q+2}-1}-i_{j_{q+1}-1}-\eta n-2\eta mu_{q+1}
>Jn-\frac{J}4n-2\frac{J}4Jn>\frac{J}4n
$$
By the $\phi$-mixing property we thus obtain
\begin{equation}\label{cluster.mixing}
\mu(C_{\vec{i}})
\le\mu(\mathcal{C}_1)\prod_{q=2}^k\!\left(\phi(\Delta_q+\eta mu_q)+\mu(\mathcal{C}_q')\right).
\end{equation}

Since $n_q\le n$ (as $mu_q\le Jn$) we obtain the simple estimate by Properties~(4)
and~(5) (we assume $J\le K$)
$$
\mu(\mathcal{C}'_q)
=\mu(T^{(1-\eta)(n+mu_q)}U_{n,u_q})
\le \mu(U_n^{\eta n}).
$$
For the case $q=1$ (the very first term) we obtain by Property~(4) that 
$$
\mu(\mathcal{C}_1)=\mu(\mathcal{C}_1')=(1+o(1))\mu(U_{n,u_1}).
$$
Since 
$$
\mu(U_{n,u_1})
=\mu(U_n)\frac{\mu(\bigcap_{w=0}^{u_1}T^{-wm}U_{n})}{\mu(U_n)}\le c_1 \mu(U_n)\vartheta^{u_1}
$$
for some $c_1$, we conclude that 
\begin{eqnarray*}
\mu(C_{\vec{i}})
&\le &c_1 \mu(U_n)\vartheta^{u_1}
\prod_{q=2}^k\!\left(\phi\!\left(\Delta_q+\eta mu_q)\right)+\mu(U_n^{\eta n})\right).
\end{eqnarray*}
Since $k$ returns are long, the remaining $\ell-k$ returns must be short. Hence
$\sum_qu_q\ge \ell-k$.
Summing over all $\vec{i}\in\mathcal{B}_k$ yields the following estimate
$$
\sum_{\vec{i}\in\mathcal{B}_k}\mu(C_{\vec{i}})
\le\sum_{\sum_{q=1}^k\Delta_q\le s}\sum_{u=\ell-k}^{\frac{s}m}\sum_{\sum_{q=1}^ku_q=u}
c_1 \mu(U_n)\vartheta^{u_1}
\prod_{q=2}^k\!\left(\phi(\Delta_q+\eta mu_q)+\mu(U_n^{\eta n})\right).
$$
where $\Delta_q+\eta mu_q$ is the gap between the clusters.

For the $(\ell+1)$ remainder term $R_{\ell+1}$ we use~\eqref{Rl+1.estimate}
and get an estimate for the total error by interchanging summation between $k$ and $\ell$:
\begin{eqnarray*}
R&\le&s\sum_{\ell=1}^sR_{\ell+1}\\
&\le&c_2 s\mu(U_n)\sum_{k=1}^s\sum_{\ell=k}^s\sum_{\sum_{q=1}^k\Delta_q\le s}\sum_{u=\ell-k}^{\frac{s}m}\sum_{\sum_{q=1}^ku_q=u}
\vartheta^{u_1}\prod_{q=2}^k\!\left(\phi(\Delta_q+\eta mu_q)+\mu(U_n^{\eta n})\right)\\
&\le&c_2 s\mu(U_n)\sum_{k=1}^s\sum_{\sum_{q=1}^k\Delta_q\le s}\sum_{u_1=0}^\infty
\vartheta^{u_1}\prod_{q=2}^k\sum_{u_q=0}^s\!\left(\phi(\Delta_q+\eta mu_q)+\mu(U_n^{\eta n})\right)\\
&\le&c_3 s\mu(U_n)\sum_{k=1}^s\sum_{\sum_{q=1}^k\Delta_q\le s}
\prod_{q=2}^k\!\left(\phi^1(\Delta_q)+s\mu(U_n^{\eta n})\right)\\
\end{eqnarray*}
(as $\{\ell\in[k,s]\wedge u\in[\ell-k,\frac{s}m]\wedge \sum_{q=1}^ku_q=u\}
\subset \{u_q\in\mathbb{N}_0, q=1,\dots,k\}$; also notice that $n_q > \eta n$)
where $\phi^1(x)=\sum_{y=x}^\infty\phi(y)$ is the tail sum of $\phi$.
For the sum over the gaps $\Delta_q$ we use the fact that $\Delta_q\ge Jn/4$ 
and obtain
\begin{eqnarray*}
R&\le&c_2s \mu(U_n)\sum_{k=1}^s
\prod_{q=2}^k\sum_{\Delta_q=\frac{Jn}4}^s\!\left(\phi^1(\Delta_q)+s\mu(U_n^{\eta n})\right)\\
&\le&c_3 s\mu(U_n)\sum_{k=1}^s\left(\phi^2(Jn/4)+s^2\mu(U_n^{\eta n})\right)^{k-1}\\
&\le&c_5 s\mu(U_n)\left(\phi^2(Jn/4)+s^2\mu(U_n^{\eta n})\right)
\end{eqnarray*}
assuming that $\phi^2(Jn/4)+c_4s^2(Jn)^{-\gamma'}<\frac12$ say and where $\phi^2$
is the tail sum of $\phi^1$.

The principal terms are for the zeroth order $\sum_{j=1}^s\mu(T^{-j}U_n)=s\mu(U_n)$
and for higher order:
$$
P_{\ell+1}=\sum_{\vec{i}\in\tilde{\mathcal{G}}_{\ell+1}}\mu(C_{\vec{i}}),
$$
where
$$
\tilde{\mathcal{G}}_{\ell+1}=\left\{\vec{i}\in I_{\ell+1}: i_{\ell}-i_0\le Jn\right\}.
$$
Then by the invariance of $\mu$,
$$
P_{\ell+1}=\sum_{\vec{i}\in\tilde{\mathcal{G}}_{\ell+1}(s)}
\mu\!\left(U_n\cap \bigcap_{j=1}^\ell T^{-(i_j-i_0)}U_n\right)
=\sum_{r=\ell}^s\sum_{\vec{i}\in\mathcal{G}_{\ell+1}(r)}\mu(C_{\vec{i}}).
$$
For $\vec{i}\in\tilde{\mathcal{G}}_{\ell+1}(s)$ one has $i_{\ell}-i_0\le Jn$ and 
since the indices $\vec{i}$ in the summands have only a single cluster
which implies that $\mathcal{G}_{\ell+1}(r)=\mathcal{G}_{\ell+1}(s)$ for 
all $r\in(Jn,s]$ (recall that $\mathcal{G}_\ell = \{\vec{i} \in \tilde{\mathcal{G}}_\ell: i_0=0\}$) we thus obtain
$$
P_{\ell+1}=(1+o(1))sQ_{\ell+1}
$$
where 
$$
Q_{\ell+1}=\sum_{\vec{i}\in\mathcal{G}_{\ell+1}(s)}\mu(C_{\vec{i}}).
$$
For $\vec{i}\in\mathcal{G}_{\ell+1}$ each coordinate $i_j$ must be a multiple
of $m$, in particular we put $u(\vec{i})=\frac{i_\ell}m$. 
For a given $u$ ($u\le J\frac{n}{m}$) the cardinality 
$\left|\{\vec{i}\in\mathcal{G}_{\ell+1}: u(\vec{i})=u\}\right|$ is bounded above by 
$\binom{u-1}{\ell-1}$. Thus with $U_{n,u}=\bigcap_{w=0}^uT^{-wm}U_n$
we get by Property~(5):
$$
Q_{\ell+1}=\sum_{u=\ell}^{\ell\frac{n}{2m}}\binom{u-1}{\ell-1}\mu(U_{n,u})(1+o(1)).
$$
Since 
$\mu(U_{n,u})=(1+o(1))\mu(U_n)\vartheta^u$ we get 
\begin{eqnarray*}
Q_{\ell+1}
&=&(1+o(1))\mu(U_n)\sum_{u=\ell}^{\ell\frac{n}{2m}}\binom{u-1}{\ell-1}\vartheta^u\\
&=&(1+o(1))\mu(U_n)\left(\frac\vartheta{1-\vartheta}\right)^\ell-E_{\ell+1},
\end{eqnarray*}
where the correction for the tail sum
$$
E_{\ell+1}=(1+o(1))\mu(U_n)\sum_{u=\ell\frac{n}{2m}+1}^\infty\binom{u-1}{\ell-1}\vartheta^u
$$
can be estimated as
$$
|E_{\ell+1}|\le c_6(1+o(1))\mu(U_n)\left(\frac\vartheta{1-\vartheta}\right)^{\ell\frac{n}{2m}}.
$$

Finally, recall that the assumption $\vartheta<\frac12$ gives $\frac\vartheta{1-\vartheta}<1$, we get that 
$$
\mathbb{P}(\tau_{U_n}\le s)
=s(1+o(1))\mu(U_n)\sum_{\ell=0}^\infty(-1)^\ell\left(\frac\vartheta{1-\vartheta}\right)^\ell
+\mathcal{R}_n
=s\mu(U_n)(1+o(1))(1-\vartheta)+\mathcal{R}_n,
$$
where 
\begin{eqnarray*}
|\mathcal{R}_n|&\le& R+s(|E_{\ell+1}|)\\
&\le& c_7 s\mu(U_n)\left(\phi^2(Jn/4)+s^2\mu(U_n^{\eta n})\right)
+c_6 s\mu(U_n)\left(\frac\vartheta{1-\vartheta}\right)^{\ell\frac{n}{2m}}.
\end{eqnarray*}
Now we choose $s=s(n)=o(\mu(U_n^{\eta n})^{\frac{1}2})$ so that
$s(n)\to\infty$ and $s(n)\mu(U_n)\to0$ as $n$ goes to infinity.
If e.g.\ $\phi(k)= \mathcal{O}(k^{-p})$ for some $p>1$, then $\phi^2(k)=\mathcal{O}( k^{-p+2})$
goes to zero as $n\to\infty$ if $p>2$.  As a result  $\mathcal{R}_n=o(1)$. 
This then yields the result
$$
\rho(x)=\lim_{n\to\infty}\frac{\mathbb{P}(\tau_{U_n(x)}\le s)}{s\mu(U_n(x))}=1-\vartheta
$$
which proves the lemma since $\eta=\frac{J}4$.

Let us observe that we can choose $\eta<\frac{J}{1+J}$ arbitrarily close enough $\frac{J}{1+J}$
which implies that we can realise $s(n)$ to be of the order $\mathcal{O}(\mu(U_n^{\eta n}))$ 
for any $\eta<\frac{J}{1+J}$. Notice that this requires us in~\eqref{Delta.hat} to write
$\hat\Delta_q=\Delta_q+\hat\eta mu_{q+1}$ where $\hat\eta>0$ has to be sufficiently small.
\end{proof}

\vspace{3mm}

\begin{remark}
 In the case of right $\phi$-mixing, the parsing is done in a slightly different way: instead of starting first the right most intersection $i_\ell$, one need to start from $i_0$. If we denote by $j_q$ and $j_{q+1}-1$ the head and tail of the $q$th cluster, then one has $i_{j_{q+1}-1} - i_{j_q} < Jn$ and $i_{j_{q+1}} - i_{j_q} > Jn$. In (\ref{cluster.mixing}), $\mathcal{C}_1$ should be replaced by $\mathcal{C}_k$; the rest of the argument remains the same.
\end{remark}

\noindent The following lemma is very standard for mixing systems
(see e.g.~\cite{GS}).

\begin{lemma}\label{product.entry.time} Let $s,t>0$ and $U\in\sigma(\mathcal{A}^n)$.
 Then for all $\Delta<s/2$ one has
$$
\left| \mathbb{P}(\tau_U > s+t) - \mathbb{P}(\tau_U > t) \mathbb{P}(\tau_U > s)  \right| 
\le 2 (\Delta\mu(U) + \phi(\Delta-n))\mathbb{P}(\tau_U>t-\Delta).
$$
\end{lemma}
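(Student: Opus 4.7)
The plan is to compare $\mathbb{P}(\tau_U > t+s)$ with the product $\mathbb{P}(\tau_U > t)\mathbb{P}(\tau_U > s)$ by artificially inserting a gap of width $\Delta$ just before time $t$, so that left $\phi$-mixing can be applied to separate the ``past'' no-hit event up to time $t-\Delta$ from the ``future'' no-hit event living on $T^{t}x$ over the window of length $s$. Writing $E_k=\{\tau_U > k\}$, note that $E_{t+s}=E_t\cap T^{-t}E_s\subset E_{t-\Delta}\cap T^{-t}E_s$, which leads to the decomposition
$$
\mu(E_{t+s}) \;=\; \mu(E_{t-\Delta}\cap T^{-t}E_s) \;-\; \mu\bigl((E_{t-\Delta}\setminus E_t)\cap T^{-t}E_s\bigr).
$$

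For the first summand the measurability $E_{t-\Delta}\in\sigma(\mathcal{A}^{t-\Delta+n})$, together with the fact that $T^{-t}E_s$ depends only on coordinates of index $\ge t$, gives a separation of $\Delta-n$ symbols, so left $\phi$-mixing produces
$$
\bigl|\mu(E_{t-\Delta}\cap T^{-t}E_s)-\mu(E_{t-\Delta})\mu(E_s)\bigr|\;\le\;\phi(\Delta-n)\,\mathbb{P}(\tau_U>t-\Delta).
$$
Replacing $\mu(E_{t-\Delta})\mu(E_s)$ by $\mu(E_t)\mu(E_s)$ on the product side costs at most $\mu(E_{t-\Delta}\setminus E_t)\mu(E_s)\le\mu(E_{t-\Delta}\setminus E_t)$, and the second summand in the decomposition above is itself trivially bounded by $\mu(E_{t-\Delta}\setminus E_t)$. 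So the whole discrepancy reduces to $\phi(\Delta-n)\mathbb{P}(\tau_U>t-\Delta)+2\mu(E_{t-\Delta}\setminus E_t)$.

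The final step is to estimate the widening error. Using the identity
$$
E_{t-\Delta}\setminus E_t \;=\; E_{t-\Delta}\cap T^{-(t-\Delta)}\{\tau_U\le\Delta\},
$$
together with the trivial union bound $\mu(\{\tau_U\le\Delta\})\le\Delta\mu(U)$, I would apply left $\phi$-mixing once more to factor this correlation, yielding
$$
\mu(E_{t-\Delta}\setminus E_t)\;\le\;\bigl(\Delta\mu(U)+\phi(\Delta-n)\bigr)\,\mathbb{P}(\tau_U>t-\Delta).
$$
Collecting the three contributions delivers the asserted inequality $2(\Delta\mu(U)+\phi(\Delta-n))\mathbb{P}(\tau_U>t-\Delta)$.

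The main technical obstacle is precisely this last estimate: a naked union bound on $\{\tau_U\le\Delta\}$ only produces the unconditional $\Delta\mu(U)$, without the factor $\mathbb{P}(\tau_U>t-\Delta)$. Extracting that factor forces one to view the widening event $E_{t-\Delta}\setminus E_t$ as a correlation between a $\sigma(\mathcal{A}^{t-\Delta+n})$-measurable event and a shifted future event, and to apply $\phi$-mixing a second time. Doing this honestly requires careful accounting of the $n$-symbol overlap between $E_{t-\Delta}$ and $T^{-(t-\Delta)}\{\tau_U\le\Delta\}$, which has to be absorbed into an additional $\phi(\Delta-n)$ contribution; the hypothesis $\Delta<s/2$ enters here to keep the mixing gap consistent with the relevant time scales.
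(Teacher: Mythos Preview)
Your decomposition is structurally sound, but the placement of the gap \emph{before} time $t$ (rather than after, as the paper does) creates a real problem in the final step that you flag but do not resolve. The event $E_{t-\Delta}\in\sigma(\mathcal{A}^{t-\Delta+n})$ uses coordinates up to $t-\Delta+n-1$, while $T^{-(t-\Delta)}\{\tau_U\le\Delta\}$ begins at coordinate $t-\Delta+1$; these overlap by $n-1$ symbols, so there is no gap at all and $\phi$-mixing cannot be applied. Your suggestion that the overlap ``has to be absorbed into an additional $\phi(\Delta-n)$ contribution'' is not a mechanism---to manufacture a gap you would have to weaken $E_{t-\Delta}$ further to $E_{t-2\Delta}$, which yields only $(\Delta\mu(U)+\phi(\Delta-n))\,\mathbb{P}(\tau_U>t-2\Delta)$, not the stated bound with $t-\Delta$. (Also, even granting your claimed estimate, your three contributions sum to $(2\Delta\mu(U)+3\phi(\Delta-n))\,\mathbb{P}(\tau_U>t-\Delta)$, not $2(\Delta\mu(U)+\phi(\Delta-n))\,\mathbb{P}(\tau_U>t-\Delta)$.) The hypothesis $\Delta<s/2$ plays no role in this difficulty; it concerns the $s$-side, not the $t$-side where your overlap occurs.

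The paper sidesteps this by inserting the gap \emph{after} time $t$: it compares $\mathbb{P}(\tau_U>t+s)$ with $\mathbb{P}(\{\tau_U>t\}\cap T^{-(t+\Delta)}\{\tau_U>s-\Delta\})$. The widening error is then $\mathbb{P}(\{\tau_U>t\}\cap T^{-t}\{\tau_U\le\Delta\})$, and the single weakening $\{\tau_U>t\}\supset\{\tau_U>t-\Delta\}$ now does double duty: it produces the desired factor $\mathbb{P}(\tau_U>t-\Delta)$ \emph{and} simultaneously opens a genuine gap of length $\Delta-n$ between $E_{t-\Delta}\in\sigma(\mathcal{A}^{t-\Delta+n})$ and the future event starting at coordinate $t+1$. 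That is the trick you are missing.
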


\begin{proof} We proceed in the traditional way splitting the difference into three parts:
$$
\big| \mathbb{P}(\tau_U > t+s) - \mathbb{P}(\tau_U > t) \mathbb{P}(\tau_U > s)  \big|
= I + II + III.
$$
In the first term we open up a gap of size $\Delta$. It is estimated as follows
\begin{eqnarray*}
I 
&=&\big| \mathbb{P}(\tau_U > t+s)-\mathbb{P}(\tau_U > t \cap \tau_U \circ T^{t+ \Delta} > s- \Delta)\big|\\
&\le& \mathbb{P}(\tau_U > t \cap  \tau_U \circ T^t\le \Delta)\\
&\le &\mathbb{P}(\tau_U > t-\Delta \cap  \tau_U \circ T^t\le \Delta)\\
&\le & \mathbb{P}(\tau_U > t- \Delta)(\mathbb{P}(\tau_U\le\Delta)+\phi(\Delta-n))\\
&\le & \mathbb{P}(\tau_U > t- \Delta)(\Delta\mu(U)+\phi(\Delta-n))
\end{eqnarray*}
where we used the $\phi$-mixing property.
Similarly for the third term in which we close the gap:
$$
III = \mathbb{P}(\tau_U > t) \mathbb{P}(s-\Delta<\tau_U\le s)
\le  \mathbb{P}(\tau_U > t) \Delta\mu(U) \le \mathbb{P}(\tau_U > t- \Delta)\Delta\mu(U).
$$
For the second term we use the $\phi$-mixing property. Since $U$ is a union of $n$-cylinders we get by the left $\phi$-mixing property:
$$
II=\big| \mathbb{P}(\tau_U > t \cap \tau_U \circ T^{t+ \Delta} > s- \Delta )  - \mathbb{P}(\tau_U > t) \mathbb{P}(\tau_U > s- \Delta)  \big| \le \phi(\Delta -n)\mathbb{P}(\tau_U > t)
$$
where $\Delta - n$ is the size of the gap in the mixing property.
Thus 
$II\le\phi(\Delta-n)\mathbb{P}(\tau_U>t-\Delta)$.

The three parts combined now prove the lemma.
\end{proof}

\begin{proof}[Proof of Theorem~\ref{escape.phi}.] Let $s$ and $\Delta<s$ be given. Then for any $t>0$ we can write 
$t=ks+r$ where $k=\floor{t/s}$ and $0\le r<s$. Let us assume $s$ is a multiple of $\Delta$ that is
$s=q\Delta$. By the last lemma we thus get for $j$ for which $jq\in\mathbb{N}$:
 $$
\mathbb{P}(\tau_U>js)
=\mathbb{P}(\tau_U>s)\mathbb{P}(\tau_U>(j-1)s)+\mathcal{O}^*(\delta)\mathbb{P}(\tau_U>(j-1-q^{-1})s)
$$
where $\delta=2(\Delta\mu(U)+\phi(\Delta-n))$. The notation $\mathcal{O}^*$ here
indicate that the involved constant is $\le1$, that is $g=\mathcal{O}^*(\delta)$ means $|g|\le \delta$.
Let $\eta=\frac{q}{q+1}$ and we want to show that
\begin{equation}\label{induction.estimate}
\mathbb{P}(\tau_U>ks)=\left(\mathbb{P}(\tau_U>s)+\mathcal{O}^*(\delta^\eta)\right)^{k-2}
\end{equation}
for $k\ge3$ and $kq\in\mathbb{N}$.
 To verify~\eqref{induction.estimate} for small values of $k$ note that 
 $\mathbb{P}(\tau_U>js)\le\mathbb{P}(\tau_U>s)$ for $j\ge1$ ($jq$ integer).
  This implies that for $3\le k\le4$, $kq\in\mathbb{N}$, one has
\begin{eqnarray*}
\mathbb{P}(\tau_U>ks)
&=&\mathbb{P}(\tau_U>s)\mathbb{P}(\tau_U>(k-1)s)+\mathcal{O}^*(\delta)\mathbb{P}(\tau_U>(k-1-q^{-1})s)\\
&\le&\mathbb{P}(\tau_U>s)^2+\mathcal{O}^*(\delta)\mathbb{P}(\tau_U>s)\\
&\le&\left(\mathbb{P}(\tau_U>s)+\mathcal{O}^*(\delta^\eta)\right)^{k-2}
\end{eqnarray*}
which verifies~\eqref{induction.estimate} for $k\in[3,4]$. The induction step for $k>4$, $kq\in\mathbb{N}$, is then
\begin{eqnarray*}
\mathbb{P}(\tau_U>js)
&=&\mathbb{P}(\tau_U>s)
\left(\mathbb{P}(\tau_U>s)+\mathcal{O}^*(\delta^\eta)\right)^{j-3}+\mathcal{O}^*(\delta)\left(\mathbb{P}(\tau_U>s)+\mathcal{O}(\delta^\eta)\right)^{j-3-q^{-1}}\\
&=&\left(\mathbb{P}(\tau_U>s)+\mathcal{O}^*(\delta^\eta)\right)^{j-3-q^{-1}}
\left(\mathbb{P}(\tau_U>s)\left(\mathbb{P}(\tau_U>s)+\mathcal{O}^*(\delta^\eta)\right)^{q^{-1}}
+\mathcal{O}^*(\delta)\right)
\end{eqnarray*}
Since $\delta=\delta^\eta\delta^\frac\eta{q}\le \delta^\eta\left(\mathbb{P}(\tau_U>s)+\mathcal{O}^*(\delta^\eta)\right)^{q^{-1}}$ one has
$$
\mathbb{P}(\tau_U>s)\left(\mathbb{P}(\tau_U>s)+\mathcal{O}^*(\delta^\eta)\right)^{q^{-1}}
+\mathcal{O}^*(\delta)
=\left(\mathbb{P}(\tau_U>s)+\mathcal{O}^*(\delta^\eta)\right)^{1+q^{-1}}
$$
thus completing the induction step:
$$
\mathbb{P}(\tau_U>js)=\left(\mathbb{P}(\tau_U>s)+\mathcal{O}^*(\delta^\eta)\right)^{j-2}.
$$
Hence~\eqref{induction.estimate} is valid for  all $k\ge3$, $kq$ integer.

We obtain 
$$
\frac1{ks}\log\mathbb{P}(\tau_U>ks)
=\frac{k-2}{ks}\log\left(\mathbb{P}(\tau_U>s)+\mathcal{O}^*(\delta^\eta)\right)
$$
and since $\mathbb{P}(\tau_U>s)=1-\mathbb{P}(\tau_U\le s)$ we conclude
$$
\lim_{k\to\infty}\frac1{ks}\log\mathbb{P}(\tau_U>ks)
=\frac1{s}\left(\mathbb{P}(\tau_U\le s)+\mathcal{O}(\delta^\eta)\right).
$$
Let $x$ be a periodic point and put  $U=U_n$.
Then
$$
\rho_{U_n(x)}=\lim_{k\to\infty}\frac1{ks}\log\mathbb{P}(\tau_{U_n(x)}>ks)
=\frac1{s}\left(\mathbb{P}(\tau_{U_n(x)}\le s)+\mathcal{O}(\delta_n^{\eta_n})\right)
$$
where $\eta_n=\frac{q_n}{1+q_n}$, $q_n=s(n)/\Delta_n$ (we assume this to be an integer) and 
\begin{equation}\label{delta_n}
\delta_n=2(\Delta_n\mu(U_n(x))+\phi(\Delta_n-n)). 
\end{equation}
Therefore
\begin{equation}\label{escape.error}
\rho(x)=\lim_{n\to\infty}\frac{\rho_{U_n}}{\mu(U_n)}
=\lim_{n\to\infty}\left(\frac{\mathbb{P}(\tau_{U_n}\le s(n))}{s(n)\mu(U_n)}
+\frac{\mathcal{O}(\delta_n^{\eta_n})}{s(n)\mu(U_n)}\right)
\end{equation}
In order to show that the limit of the second term on the RHS is zero in the case when 
$x$ is a periodic point we consider now the two cases: (I)~when $\mu(U_n)$ decays polynomially
 and (II)~when $\mu(U_n)$  decays exponentially.

\vspace{2mm}

\noindent {\bf (I)} Assume there exist $0<\gamma'<\gamma''$ so that 
$n^{-\gamma''}\lesssim\mu(U_n)\lesssim n^{-\gamma'}$. 
Let $\alpha\in(0,\gamma'/2)$, $\beta\in(0,1)$
and put $s=s(n)=[(Jn)^\alpha]$, $\Delta_n\sim s^\beta$ so that $q_n=s_n/\Delta_n$ is an integer.
Clearly $q_n\sim s(n)^{1-\beta}$ implies $\eta_n\to1$ as $n\to \infty$. Also, the fact that $s\to\infty$, 
implies that  $s(n)\mu(U_n)\to0$ and 
$s(n)=o(\mu(U_n^\frac{Jn}4)^\frac12)\lesssim (Jn)^\frac{\gamma'}2$ ($J=J(n)$)
of Lemmata~\ref{lemma.periodic} and~\ref{lemma.nonperiodic}
are satisfied.
If we assume that $\phi$ decays polynomially, i.e.\ 
$\phi(\ell)\lesssim \ell^{-p}$ for some $p>0$, then
$\delta_n=\mathcal{O}(s(n)^{\beta}\mu(U_n)+(Jn)^{-\alpha\beta p})$. 

Now we let $n$ go to infinity which implies that $s\to\infty$ and for the error term
\begin{eqnarray*}
\lim_{n\to\infty}\frac{\delta_n^{\eta_n}}{s(n)\mu(U_n)}
=\lim_{n\to\infty}\frac{\mathcal{O}(s(n)^{\beta\eta_n}\mu(U_n)^{\eta_n})}{s(n)\mu(U_n)}
+\lim_{n\to\infty}\frac{\mathcal{O}((Jn)^{-\alpha\beta p\eta_n})}{(Jn)^\alpha\mu(U_n)}.
\end{eqnarray*}
The first term on the RHS goes to zero as $\beta<1$ and $\eta_n$ converges to $	1$. 
Since by assumption $\mu(U_n)\ge c_1n^{-\gamma''}$ the second term on the RHS 
is bounded by 
$\mathcal{O}(n^{-(\zeta\alpha\beta p\eta_n+\zeta\alpha-\gamma'')})$
and converges to zero if $\alpha\beta p+\alpha-\frac{\gamma''}\zeta$ is positive (again $\eta_n\to1$).
This is achieved since by assumption $p>\frac{2\gamma''}{\zeta\gamma'}-1$
and  as $\alpha<\frac{\gamma'}2$  can be chosen close enough to $\frac{\gamma'}2$ 
and $\beta<1$ close enough to $1$.
Consequently
$$
\lim_{n\to\infty}\frac{\delta_n^{\eta_n}}{s(n)\mu(U_n)}=0.
$$

\vspace{2mm}

\noindent {\bf (II)} Now assume there exist $0<\xi_1<\xi_2<1$ so that 
$\xi_1^n\lesssim\mu(U_n)\lesssim\xi_2^n$. 
Let $\alpha, \beta<1$ and put $s(n)=\xi_2^{-\frac{J}8\alpha n}$ 
and $\Delta_n=s(n)^\beta$. The condition $s=o(\mu(U_n^\frac{Jn}4)^\frac12)$
of Lemma~\ref{lemma.periodic} is thus satisfied. 
We again use~\eqref{delta_n} and obtain that the first term on the RHS of 
$$
\frac{\delta_n^{\eta_n}}{s(n)\mu(U_n)}
\lesssim2\Delta_n^{-(1-\beta)\eta_n}+2\frac{\phi(\Delta_n-n)^{\eta_n}}{s(n)\mu(U_n)}
$$ 
obviously goes to $0$ as $\beta<1$. For the second term on the RHS we get 
$$
\frac{\phi(\Delta_n-n)^{\eta_n}}{s(n)\mu(U_n)}
\lesssim\frac{\xi_2^{\frac{J}8\alpha\beta p\eta_n n}}{\xi_2^{-\frac{J}8\alpha n}\xi_1^n}
=\xi_2^{n\left(\frac{J}8\alpha(\beta p\eta_n+1)-\frac{\log\xi_2}{\log\xi_1}\right)}
$$
converges to $0$ as $n\to\infty$ since $\phi$ decays at least polynomially 
with power $p>\frac8J\frac{\log\xi_2}{\log\xi_1}-1$
and $\alpha, \beta<1$ can be chosen arbitrarily close to $1$ and $\eta_n$ converges to $1$.
Again we conclude that the error term~\eqref{escape.error} is equal to zero.

\vspace{3mm}

\noindent Thus in both cases~(I) and~(II) we obtain  by an application of
 Lemma~\ref{lemma.periodic} that
$$
\rho(x)=\lim_{n\to\infty}\frac{\mathbb{P}(\tau_{U_n}\le s(n))}{s(n)\mu(U_n)}
=1-\vartheta.
$$
for any periodic point $x$ with $\vartheta <\frac12$.
If $x$ is a non-periodic point then by Lemma~\ref{lemma.nonperiodic} 
(there we need $\gamma'>1$)
we get that $\rho(x)=1$.
\end{proof}

\section{Escape rate for metric balls}\label{excape_metric_ball}

As an application of Theorem~\ref{escape.phi} we will indicate how one can obtain
the limiting distribution for metric balls for maps on metric spaces. 
We will still require that there be a generating partition with respect to which the measure
is right $\phi$-mixing. The balls will then be approximated by unions
of cylinders. 

Let $T$ be a map on a metric space $\Omega$ and let $\mathcal{A}=\{A_j:j\}$ be a generating finite
or countable infinite partition of $\Omega$, that is $\Omega=\bigcup_jA_j$ and
$A_j\cap A_i=\varnothing$ for $i\not= j$. 
As before we denote by $\mathcal{A}^n$ the $n$th joint of the partition.
Assume there is a $T$-invariant probability measure $\mu$ on $\Omega$. 

In order to obtain the escape rate for metric balls, we will approximate the balls $B_r(x)$ from inside and outside by unions of cylinders. For this purpose, for any given $r>0$ we will take $n=n(r)$ and put
$$
U_n^-=\bigcup_{A\in \mathcal{A}^n: \,A\subset B_r(x)}A,
\hspace{2cm}
U_n^+=\bigcup_{A\in \mathcal{A}^n:\,  A\cap B_r(x)\not=\varnothing}A
$$
the largest union of $n$-cylinders contained in $B_r(x)$ and the smallest union 
of $n$-cylinders that contains $B_r(x)$ respectively. The choice of $n(r)$ will be made clear in the proof of the next theorem.

The following result also has a formulation in the left $\phi$-mixing case:

\begin{thm}\label{escape.balls}
Let $\mu$ be an invariant measure on a Riemann manifold $\Omega$ with $C^{1+\alpha}$-map $T$
for some $\alpha>0$. 
Assume there
is a generating partition (finite or countably infinite) $\mathcal{A}$
with the following property where $x\in\Omega$:  \\
(i) $\mu$ is right $\phi$-mixing with rate $\phi(k)$ decaying at least polynomially with power $p$.\\
(ii) $\diam\,\mathcal{A}^n$ decays polynomially fast with power $\varpi$ as $n\to \infty$.\\
(iii) There exists $w>1$ such that 
$\frac{\mu(B_{r+r^w}(x))}{\mu(B_r(x))}\longrightarrow1$ as $r\to0^+$ for every $x$.\\
(iv) There exists $d>0$ so that $\mu(B_r(x))=\mathcal{O}(r^d)$ for all $r>0$ small enough and all $x$.\\

Assume $\frac{\varpi d}w>1$ and that Property~(5) is satisfied for the adapted neighbourhood systems $\{U_n^-\}$ and $\{U_n^+\}$and $J(n)\gtrsim n^{\zeta-1}$ for some $\zeta\in(0,1]$, then,
if $p>\frac{2-\zeta}\zeta\vee2$, 
$$
\rho(x)=\lim_{r\to0}\frac{\rho_{B_r(x)}}{\mu(B_r(x))}=\begin{cases}
1&\mbox{if $x$ is non periodic}\\
1-\vartheta&\mbox{if $x$ is periodic}\end{cases}
$$
for every $x\in\Omega$ provided the limit 
$\vartheta=\lim_r\frac{\mu(T^{-m}B_r(x)\cap B_r(x))}{\mu(B_r(x))}<\frac12$ exists for $x$ periodic
with minimal period $m$.
\end{thm}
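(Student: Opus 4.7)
The strategy is to sandwich the ball $B_r(x)$ between its inner and outer cylinder approximations and apply Theorem~\ref{escape.phi} to each, then squeeze. First I would choose $n=n(r)$ to be the smallest integer with $\diam \mathcal{A}^n \le r^w$; by hypothesis (ii) this yields $n(r) \asymp r^{-w/\varpi}$. Any $n$-cylinder meeting $B_r(x)$ lies in $B_{r+r^w}(x)$, so $U_n^-\subset B_r(x)\subset U_n^+\subset B_{r+r^w}(x)$. Assumption (iii) then gives $\mu(U_n^\pm)/\mu(B_r(x))\to 1$ as $r\to 0^+$. In particular, the limit $\vartheta$ defined using $B_r(x)$ coincides with the one defined using $U_n^-$ or $U_n^+$ at the relevant scales.

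Next I would verify that $\{U_n^+\}_n$ and $\{U_n^-\}_n$ are adapted neighborhood systems for $x$. Properties~(1)--(3) are immediate from the construction and the generating property of $\mathcal{A}$. Property~(5) is assumed in the hypothesis. The real check is Property~(4): in the right $\phi$-mixing setting, $U_{n,u}^{\pm,j}=A_j(U_{n,u}^\pm)$ is the smallest union of $j$-cylinders containing $U_{n,u}^\pm$. For $j\le K(n+um)$, each constituent $j$-cylinder has diameter $\lesssim j^{-\varpi}$ by (ii), so $U_{n,u}^{\pm,j}$ fits inside a ball around $x$ of radius comparable to $j^{-\varpi}$ (using $r\lesssim n^{-\varpi/w}\lesssim j^{-\varpi/w}$). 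Then (iv) yields $\mu(U_{n,u}^{\pm,j})\lesssim j^{-\varpi d}$, giving the required polynomial decay with exponent $\gamma'=\varpi d/w>1$.

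With the hypotheses verified, I would invoke the polynomial case (I) of Theorem~\ref{escape.phi}. The annulus control in (iii) allows $\gamma''$ to be taken arbitrarily close to $\gamma'$, so the condition $p>2\gamma''/(\zeta\gamma')-1$ reduces, in the limit $\gamma''/\gamma'\to 1$, to $p>(2-\zeta)/\zeta\vee 2$ as stated. This delivers
\[
\frac{\rho_{U_n^\pm}}{\mu(U_n^\pm)}\xrightarrow[n\to\infty]{}1-\vartheta(x).
\]
The monotonicity $\rho_{U_n^-}\le \rho_{B_r(x)}\le \rho_{U_n^+}$ together with $\mu(U_n^\pm)/\mu(B_r(x))\to 1$ then yields $\rho_{B_r(x)}/\mu(B_r(x))\to 1-\vartheta(x)$, completing the proof. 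For non-periodic $x$, $\vartheta=0$ and one uses Lemma~\ref{lemma.nonperiodic} in the background of Theorem~\ref{escape.phi} just the same.

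The main obstacle is verifying Property~(4) uniformly across the range $j\le K(n+um)$: the outer hull $A_j(U_{n,u}^\pm)$ is a union of $j$-cylinders which could in principle protrude well beyond $U_{n,u}^\pm$, so one must carefully bound its diameter and then convert that bound into a measure estimate via (iv). The quantitative polynomial comparison of $\diam \mathcal{A}^j$ in (ii) with the ball measures in (iii)--(iv) is exactly what makes this translation go through, and determines how the exponent condition on $p$ in Theorem~\ref{escape.phi}(I) rewrites in terms of the geometric parameters $\varpi$, $d$, $w$, $\zeta$.
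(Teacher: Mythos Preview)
Your approach matches the paper's: sandwich $B_r(x)$ between $U_n^\pm$, verify that these are adapted neighbourhood systems (chiefly Property~(4) via the diameter bound and assumption~(iv)), apply Theorem~\ref{escape.phi}(I) with $\gamma',\gamma''$ taken near $\varpi d/w$, and squeeze using monotonicity of the escape rate together with $\mu(U_n^\pm)/\mu(B_r(x))\to 1$. There is a minor slip in your Property~(4) computation: the correct radius bound for $A_j(U_{n,u}^\pm)$ is $r+\diam\mathcal{A}^j=\mathcal{O}(j^{-\varpi/w})$ (since $r\lesssim j^{-\varpi/w}$ and $j^{-\varpi}\le j^{-\varpi/w}$), giving $\mu\lesssim j^{-\varpi d/w}$ rather than $j^{-\varpi d}$, which is exactly the $\gamma'=\varpi d/w$ you then state.
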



\begin{proof} 
By assumption~(ii)  $\diam\,\mathcal{A}^n\le c_1n^{-\varpi}$
for some $c_1$. For every $r>0$ we choose $n=n(r)$ such that $r^w\ge n^{-\varpi}$.
This can be achieved by taking $n(r)=\left[2^{1/\varpi}r^{-w/\varpi}\right]+1$.
Fix $x$ and put
$$
U_n^-=\bigcup_{A\in \mathcal{A}^n: \,A\subset B_r(x)}A,
\hspace{2cm}
U_n^+=\bigcup_{A\in \mathcal{A}^n:\,  A\cap B_r(x)\not=\varnothing}A
$$
Note that in the case when $\diam\,\mathcal{A}^n$ decays exponentially then 
$n$ can be chosen proportional to $|\log r|$.

Assume $x$ is a periodic point with minimal period $m$ so that the limit defining
$\vartheta$ using metric balls exists.
In order to show that the same limits are obtained when using the 
approximations $U_n^\pm$,   
denote $r^\pm=r\pm r^w$
 (thus $B_{r-r^w}(x)\subset U_n^-$ and $U_n^+\subset B_{r+r^w}(x)$)
and note that by assumption~(iii)
$$
\frac{\mu(T^{-m}U_n^+\cap U_n^+)}{\mu(U_n^+)}
\le\frac{\mu(T^{-m}U_n^+\cap U_n^+)}{\mu(B_{r}(x))}
=\frac{\mu(T^{-m}B_{r^+}(x)\cap B_{r^+}(x))}{\mu(B_{r^+}(x))}(1+o(1))
$$
and similarly
$$
\frac{\mu(T^{-m}U_n^+\cap U_n^+)}{\mu(U_n^+)}
\ge\frac{\mu(T^{-m}B_{r}(x)\cap B_{r}(x))}{\mu(U_n^+)}
=\frac{\mu(T^{-m}B_{r}(x)\cap B_{r}(x))}{\mu(B_{r}(x))}(1+o(1)).
$$
The limits as $r\to0$ on the RHS of the last two inequalities exist by assumption and
equal $\vartheta$.  Hence 
$\vartheta=\lim_n\frac{\mu(T^{-m}U_n^+\cap U_n^+)}{\mu(U_n^+)}$.
In the same way one shows 
$\vartheta=\lim_n\frac{\mu(T^{-m}U_n^-\cap U_n^-)}{\mu(U_n^-)}$.

The properties~(1), (2) and~(3) of the neighbourhood systems  $\{U_n^-:n\}$
and $\{U_n^+:n\}$ are satisfied by construction. 
In order to satisfy Property~(4)  observe that 
$r^w=\mathcal{O}(n^{-\varpi})$ which implies that 
$V_j=\bigcup_{A\in\mathcal{A}^j:\,A\cap B_r(x)\not=\varnothing}A$ is the best approximation of $U^+_n$ by $j$-cylinders and
has diameter $\le r+\diam\,\mathcal{A}^j=\mathcal{O}(j^{-\varpi/w})$ for all
$j\le n$, i.e.\  $K=1$. By assumption~(iv) we conclude that $\mu(V_j)=\mathcal{O}(j^{-\varpi d/w})$.
Since $U_n^\pm\subset V_j$ we conclude that property~(4) is satisfied
with $\gamma'<\frac{\varpi d}w$ which by assumption is greater than $2$.
By Theorem~\ref{escape.phi} we now conclude that 
$$
\lim_{n\to\infty}\frac{\rho_{U_n^-}}{\mu(U_n^-)}
=\lim_{n\to\infty}\frac{\rho_{U_n^+}}{\mu(U_n^+)}=1-\vartheta,
$$
where we put $\vartheta(x)=0$ if $x$ is  not periodic.

In order to obtain the same limit for the balls $B_r(x)$ as $r$ goes to zero,
note that  $U_n^-\subset B_r(x)\subset U_n^+$ implies
$\rho_{U_n^-}\le\rho_{B_r(x)}\le\rho_{U_n^+}$ and consequently
$$
\frac{\rho_{U_n^-}}{\mu(U_n^+)}\le\frac{\rho_{B_r(x)}}{\mu(B_r(x))}\le\frac{\rho_{U_n^+}}{\mu(U_n^-)}.
$$
With $r^\pm=r\pm r^w$ one has 
$U_n^+\setminus U_n^-\subset B_{r^+}\setminus B_{r^-}$ and since by assumption 
$\frac{\mu(B_{r^+}\setminus B_{r^-})}{\mu(B_r)}\to0$ as 
$r\to0$ we also get that $\mu(U_n^-)=\mu(U_n^+)(1+o(1))$ as $n\to\infty$.
Hence $\lim_n\frac{\rho_{U_n^-}}{\mu(U_n^+)}=\lim_n\frac{\rho_{U_n^-}}{\mu(U_n^-)}$
and $\lim_n\frac{\rho_{U_n^+}}{\mu(U_n^-)}=\lim_n\frac{\rho_{U_n^+}}{\mu(U_n^+)}$.
 
For the values of $\gamma''$ and $\gamma'$ in Theorem~\ref{escape.phi} Case~(I) we take
 $\gamma''>\frac{\varpi d}w>\gamma'>1$ close enough to $\frac{\varpi d}w$ 
 so that $p>(\frac{2\gamma''}{\zeta\gamma'}-1)\vee2$ (recall that $p>2$ by assumption, so such $\gamma''$ and $\gamma'$ exist). 
 Since Property~(5) is satisfied by assumption, we thus can apply Theorem~\ref{escape.phi},
 Case~(I)
to the neighbourhood systems $\{U_n^\pm:n\}$
 and conclude that $\lim_{r\to0}\frac{\rho_{B_r(x)}}{\mu(B_r(x))}=1-\vartheta$.
\end{proof}

In the last theorem we require that Property~(5) is satisfied. In order to ensure
that this property is satisfied we will have to make additional assumptions
on the size of the cylinders which will have to decay exponentially and
the linearisation of the map at periodic points.  This is done in the following theorem.
We will require a somewhat non-standard annulus property. 

Let $x$ be a periodic point of $T$ of minimal period $m$ and assume that
the eigenvalues of the linearisation $DT^m(0)$ that are greater or equal to $1$ 
do not have generalised eigenvectors. That is, if $\lambda_1\ge\lambda_2\ge
\cdots\ge\lambda_u$ are the eigenvalues $\ge1$ then there are one-dimensional
eigenspaces $V(\lambda_j)$ on which $DT^m$ acts affinely.

Denote by $Q'_r(0)$ the product of $(-r,r)^u\subset \bigotimes_{j=1}^u V(\lambda_j)$
and the ball of diameter $r$ and centre $0$ in 
$\left(\bigotimes_{j=1}^u V(\lambda_j)\right)^\perp\subset T_x\Omega$.
Now put $Q_r(x)=\Phi_x(Q'_r(0))$, where $\Phi_x$ is the exponential map
at $x$.

\begin{thm}\label{escape.balls.exponential}
As in Theorem~\ref{escape.balls} let $T:\Omega\circlearrowleft$ be a $C^2$-map on 
a Riemann manifold $\Omega$ and $\mu$ a $T$-invariant probability measure.
Also, $\mathcal{A}$ is a generating partition of $\Omega$ so that:  \\
(i) $\mu$ is right $\phi$-mixing with rate $\phi(k)$ decaying at least polynomially with power  $p$.\\
(ii) $\diam\,\mathcal{A}^n\lesssim \eta^{n^\zeta}$ for some $\eta\in(0,1]$ and $\zeta\in(0,1]$.\\
(iii) There exists $w>1$ such that 
$\frac{\mu(B_{r+r^w}(x))}{\mu(B_r(x))},
\frac{\mu(Q_{r+r^w}(x))}{\mu(Q_r(x))}\longrightarrow1$ as $r\to0^+$ for every $x$.\\
(iv) There exists $d>0$ so that $\mu(B_r(x))=\mathcal{O}(r^d)$ for all $r>0$ small enough and all $x$.\\

If $p>\frac{2-\zeta}\zeta\vee2$ then
$$
\rho(x)=\lim_{r\to0}\frac{\rho_{B_r(x)}}{\mu(B_r(x))}=\begin{cases}
1&\mbox{if $x$ is non periodic}\\
1-\vartheta&\mbox{if $x$ is periodic}\end{cases}
$$
for every $x\in\Omega$ provided the limit 
$\vartheta=\lim_r\frac{\mu(T^{-m}B_r(x)\cap B_r(x))}{\mu(B_r(x))}<\frac12$ exists in the case when $x$ is a periodic
point with minimal period $m$, for which $DT^m(x)$ has no generalised eigenvectors
to eigenvalues larger than or equal to $1$.
\end{thm}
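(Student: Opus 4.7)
The plan is to follow the scheme of Theorem~\ref{escape.balls}—approximate the balls by cylinder unions $U_n^{\pm}$ and apply Theorem~\ref{escape.phi}—with the new technical content being a direct verification of Property~(5) at periodic points, which in Theorem~\ref{escape.balls} was an assumption. For given $r>0$, choose $n=n(r)$ with $\diam \mathcal{A}^n \le r^w$; assumption~(ii) allows $n(r) \asymp |\log r|^{1/\zeta}$. Define
\[
U_n^{-} = \bigcup_{A \in \mathcal{A}^n:\, A \subset B_r(x)} A, \qquad U_n^{+} = \bigcup_{A \in \mathcal{A}^n:\, A \cap B_r(x) \ne \varnothing} A,
\]
so $U_n^- \subset B_r(x) \subset U_n^+$; the annulus assumption~(iii) yields $\mu(U_n^\pm)=\mu(B_r(x))(1+o(1))$. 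Properties~(1)--(3) are immediate, and Property~(4) follows from $\mu(U_n^j) \lesssim (\diam \mathcal{A}^j)^d \lesssim \eta^{dj^\zeta}$, which decays faster than any polynomial.

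For Property~(5) at a periodic point $x$ with minimal period $m$, let $0 = i_0 < i_1 < \cdots < i_k \le Jn$ be multiples of $m$, and set $k_j = i_j/m$, $u = k_k$. In a chart $\Phi_x$ centred at $x$ adapted to the eigenspace decomposition of $DT^m(x)$, write $F = \Phi_x^{-1}\circ T^m\circ\Phi_x$. The assumption that no eigenvalue $\ge 1$ carries a generalised eigenvector makes $DF(0)$ diagonal on $E = \bigoplus_{j=1}^u V(\lambda_j)$ with one-dimensional eigenspaces. In the linearisation, $DF(0)^{-k_j}(B_r(0))$ is an ellipsoid with semi-axis $r\lambda_i^{-k_j}$ along $V(\lambda_i)$ (shrinking in $k_j$ when $\lambda_i \ge 1$) and with semi-axis $\ge r$ along strictly contracting directions in $E^\perp$. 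The intersections $\bigcap_{j=0}^k DF(0)^{-k_j}(B_r(0))$ and $\bigcap_{w=0}^u DF(0)^{-w}(B_r(0))$ therefore agree to leading order: both collapse to the same box, whose projection onto $E$ is $\prod_{j=1}^u (-r\lambda_j^{-u}, r\lambda_j^{-u})$ (determined solely by the largest index $u$) and whose projection onto $E^\perp$ is the image of $B_r$. This box is, up to the rescaling along $E$, a set of the form $Q_{r'}(x)$ appearing in assumption~(iii).

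To transfer this to the nonlinear dynamics, the $C^2$ hypothesis gives a Taylor expansion $F^{-k}(y) = DF(0)^{-k}y + R_k(y)$ with $\|R_k(y)\| \lesssim C^k \|y\|^2$ on $B_r(0)$, for a constant $C$ depending on $\|DF(0)^{-1}\|$ and $\|D^2 F\|_{\infty}$. Since $u\le Jn/m \asymp |\log r|^{1/\zeta}$, we get $C^u r^2 = r^{2-o(1)}$, which is smaller than $r^w$ once $w$ is chosen sufficiently close to $1$. Thus the nonlinear intersection is sandwiched between dilates $Q_{r(1\pm\delta_r)}(x)$ of the linearised box with $\delta_r\to 0$, and the $Q_r$ annulus assumption gives
\[
\mu\!\left(\bigcap_{j=0}^k T^{-i_j}(B_r(x))\right) = \mu(U_{n,u})(1+o(1)),
\]
which is Property~(5). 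Theorem~\ref{escape.phi} in Case~(I) then applies to $\{U_n^\pm\}$ (the stretch-exponential decay of $\mu(U_n)$ is a fortiori polynomial, and $p>(2-\zeta)/\zeta\vee 2$ with suitable $\gamma',\gamma''$ satisfies the parameter requirements), and the passage $\rho_{U_n^-}\le\rho_{B_r(x)}\le\rho_{U_n^+}$ transfers the limit to balls as in Theorem~\ref{escape.balls}. The main obstacle is controlling the accumulated linearisation error when neutral eigenvalues are present, since $DF(0)^{-1}$ is then not a contraction; the bound $u = O(|\log r|^{1/\zeta})$ combined with $C^2$ smoothness is precisely what keeps this error within the tolerance dictated by the annulus condition on $Q_r$.
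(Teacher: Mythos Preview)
Your overall strategy coincides with the paper's: reduce to Theorem~\ref{escape.balls} by verifying Property~(5) via linearisation at $x$ and the $Q_r$-annulus condition. However, the quantitative core of your linearisation argument contains a genuine error.

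You write $\|R_k(y)\|\lesssim C^k\|y\|^2$ and then assert that because $u\le Jn/m\asymp|\log r|^{1/\zeta}$ one has $C^u r^2=r^{2-o(1)}$. This is false. With $u\asymp|\log r|^{1/\zeta}$ and any $C>1$ (which occurs whenever $DT^m(x)$ has a contracting eigenvalue, since then $\|DF(0)^{-1}\|>1$), one gets $C^u=\exp\bigl(c|\log r|^{1/\zeta}\bigr)$. For $\zeta<1$ this grows faster than any negative power of $r$, so $C^u r^2\to\infty$; for $\zeta=1$ it equals $r^{-c'}$ for a fixed $c'>0$, giving $C^u r^2=r^{2-c'}$ and not $r^{2-o(1)}$. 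Either way the error is not dominated by any $r^w$ with $w>1$.

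The paper resolves this not by shrinking the error but by matching scales. The linearised intersection $\bigcap_{j\le k}A^{-j}B'_r(0)$ has smallest scale $\tilde r\asymp|DT^m|_\infty^{-u}r$ in the most expanding direction, and the $Q$-annulus condition must be invoked at \emph{that} scale: one needs the linearisation error (of order $r^2$) to be $\lesssim\tilde r^{\,w}$. Writing this out, $r^2\lesssim(|DT^m|_\infty^{-u}r)^w$ with $r\asymp\eta^{n^\zeta/w}$ forces
\[
u\;\le\;Jn/m\quad\text{with}\quad J\;<\;m\,\frac{|\log\eta|}{\log|DT^m|_\infty}\,\frac{2-w}{w^2}\,\frac1{n^{1-\zeta}},
\]
so $J(n)\asymp n^{-(1-\zeta)}$ is \emph{derived}, not assumed, and this is precisely what feeds into the hypothesis $J(n)\gtrsim n^{\zeta-1}$ of Theorem~\ref{escape.balls}. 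Your sandwich $Q_{r(1\pm\delta_r)}(x)$ is at the wrong scale (it should be at $\tilde r$, covered by many $Q_{\tilde r_j}(x_j)$ as in the paper), and your implicit choice of constant $J$ makes $u$ too large for the error to be controlled at all. Fixing this requires reproducing the scale-matching inequality above and the resulting constraint on $J(n)$.
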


\begin{proof} In view of Theorem~\ref{escape.balls} we only have to verify 
Property~(5) at periodic points $x$ with period $m$ where $DT^m$ has 
simple eigenvalues.
 
By the smoothness of the map and the fact that $A=DT^m(x)$
has only single eigenvalues imply that Property~(5) is satisfied for the map
linearisation $A:T_x\Omega\circlearrowleft$,
that is $B'_r(0)\cap A^{-k}B'_r(0) =\bigcap_{j=0}^kA^{-j}B'_r(0)$ for any $k$, 
 where $B'_r(0)$ denotes the ball in $T_x\Omega$ of radius $r$ and centre $0$.
Denote by $\Phi_x$ the exponential map at $x$, then $B_r(x)=\Phi_x(B'_r(0))$.
If we let $y'\in B'_r(0)$ be so that $A^{-j}y'\in B'_r(0)$ then there exists a constant $c_1$
such that 
$$
\left|\Phi_x(A^{-j}y')-T^{-jm}\Phi_x(y')\right|
\le r\sup_{B'_r(0)}|D\Phi_x|
\le c_1 r^2
$$
as $\Phi_x$ is a local $C^2$-map. Let $w\in(1,2)$ and as in  Theorem~\ref{escape.balls} 
we require $n$ to be so that $\diam\,\mathcal{A}^n<r^w$ which 
is satisfied if $n> w\frac{\log r}{\log\eta}$.
Let us put $B^{(k)}_r(x)=\bigcap_{j=0}^kT^{-jm}B_r(x)$ and 
similarly $B'^{(k)}_r(x)=\bigcap_{j=0}^kA^{-j}B'_r(x)$. For any set $A\in\Omega$ denote by $B_r(A)$ the set 
$$
B_r(A) = \bigcup_{x\in A}B_r(x),
$$
then 
$$
\mathcal{D}=\Phi_x(B'^{(k)}_r(0))\triangle B^{(k)}_r(0)
\subset B_{r'}\!\left(\Phi_x\!\left(\partial B'^{(k)}_r(0)\right)\right)
$$
for some $r'\lesssim r^2$ since $d(T^{jm}y,\Phi_xA^j\Phi_x^{-1}y)\lesssim r^2$ 
for all $y\in B^{(k)}_r(x)$. Now notice that there are sets $Q_{\tilde{r}}$ of some radius 
 $\tilde{r}\ge |DT^m|_\infty^{-k}r$ inside the intersection 
 $\Phi_x(B'^{(k)}_r(0))$.

We put inside $\Phi_x(B_r'^{(k)}(0))$ sets $Q_{\tilde{r}_j}(x_j)$ 
 with centres $x_j$ and radii $\tilde{r}_j\gtrsim|DT^{i_k}(0)|^{-1}r\gtrsim |DT^{Jn}(0)|^{-1}r$,
  $j=1,2,\dots,R$,
 so that
 $$
 \mathcal{D}\subset\bigcup_j(Q_{\tilde{r}_j+\tilde{r}_j^w}(x_j)\setminus Q_{\tilde{r}_j-\tilde{r}_j^w}(x_j)).
 $$
Clearly we need
 $ \tilde{r}_j^w\gtrsim r'$, where $r'\lesssim r^2$. Since 
 $r^w\ge\eta^{n^\zeta}\ge\diam\,\mathcal{A}^n$, we get that $r^2\le\tilde{r}_j^w$ if
 the inequality $r^2\le|DT^m|_\infty^{-Jw\frac{n}m}r^w$ is satisfied.
 Consequently we must choose 
 $$
 J<m\frac{|\log\eta|}{\log|DT^m|_\infty}\frac{2-w}{w^2}\frac1{n^{1-\zeta}},
 $$
 where $\zeta$ is the same value to be used in Theorem~\ref{escape.balls}.
 We  therefore obtain by the annulus assumption 
 \begin{eqnarray*}
\frac{\mu(\mathcal{D})}{\mu(\Phi_x(B'^{(k)}_r(0)))}
&\le& \max_j\frac{\mu(Q_{\tilde{r}_j+\tilde{r}_j^{w}}\setminus Q_{\tilde{r}_j-\tilde{r}_j^{w}}(x_j))}
{\mu(Q_{\tilde{r}_j}(x_j))}
\longrightarrow0
 \end{eqnarray*}
 as $r$ goes to zero.
 
We have thus shown that if we have indices $0=i_0<i_1<\cdots<i_k\le Jn$ which are multiples of $m$,
then
$$
\mu\!\left(\bigcap_{j=0}^kA^{-i_j/m}B'_r(0)\right)
=\mu\!\left(\bigcap_{j=0}^kT^{-i_j}B_r(x)\right)\!\left(1
+o(1)\right)
$$
as $r\to0$. We have thus verified Property~(5) for the adapted neighbourhood system.

To check the conditions of Theorem~\ref{escape.balls}  we first note that
since by Assumption~(ii) the diameters decay superpolynomially, we can choose 
the value of $\varpi$ arbitrarily large. We can thus  apply Theorem~\ref{escape.balls}
to  conclude that $\lim_{r\to0}\frac{\rho_{B_r(x)}}{\mu(B_r(x))}=1-\vartheta$.
\end{proof}

\begin{remark}
Theorem~\ref{escape.balls.exponential} also applies to invertible maps. The only 
difference in that case is that the $n$-th join is given by
 $\mathcal{A}=\bigvee_{[n/2]\le j< [n/2]+n}T^{-j}\mathcal{A}$ and has to satisfy condition~(ii).
 \end{remark}

For an expanding map on a compact manifold Conditions~(ii), (iii) and~(vi) are met
for any invariant absolutely continuous measure.
In~(iv) the value of $d$ is equal to the dimension of the manifold.

 \section{Maps with Young's tower}\label{markov.towers}
 
In this section we consider differentiable maps on manifolds which can be modeled by Young's tower as constructed in \cite{Y2,Y3}.

We assume that  $T$ is a non-invertible, differentiable map of a
Riemannian manifold $M$. Assume that there is a subset $\Omega_0\subset M$ with the following properties:\\
(i) $\Omega_0$ is partitioned into disjoint sets $\Omega_{0,i}, i=1,2,\dots$ and
there is a {\em return time function} $R:\Omega_0\rightarrow\mathbb{N}$,  constant on
the partition elements $\Omega_{0,i}$,  such that $T^R$ maps $\Omega_{0,i}$ bijectively
to the entire set $\Omega_0$. We write $R_i=R|_{\Omega_{0,i}}$ for simplicity.\\
(ii) For $j=1,2,\dots, R_i-1$ put $\Omega_{j,i}=\{(x,j): x\in\Omega_{0,i}\}$
and define $\Omega =\bigcup_{i=1}^\infty\bigcup_{j=0}^{R_i-1}\Omega_{j,i}$. Note that $\{(x,0): x\in\Omega_{0,i}\}$ can be naturally identified with $\Omega_{0,i}$.
$\Omega$ is called the {\em Markov tower} or {\em Young's tower} for the map $T$. It has the associated partition
$\mathcal{A}=\{\Omega_{j,i}:\; 0\le j<R_i, i=1,2,\dots\}$ which typically is countably infinite.
The map $F: \Omega\to\Omega$ is given by
$$\
F(x,j)=\left\{\begin{array}{ll}(x,j+1) &\mbox{if } j<R_i-1\\
(\hat{T}x,0)&\mbox{if } j=R_i-1\end{array}\right.
$$
where we put $\hat{T}=T^R$ for the induced map on $\Omega_0$. If we denote by $\pi_\Omega:\Omega\to  M$, $\pi_\Omega((x,j))=T^jx$ then $\pi_\Omega$ semi-conjugates $F$ and $T$.\\
(iii) Non-uniformly expanding: there is $0<\kappa<1$ such that for all $x,y\in\Omega_{0,i}$, $d(\hat{T}x,\hat{T}y)>\kappa^{-1}d(x,y)$. Moreover, there is
$C>0$ such that $d(T^kx,T^ky)\le Cd(\hat{T}x, \hat{T}y)$ for all $x,y\in\Omega_{0,i}$ and $0\le k < R_i$.
\\
(iv) The {\em separation function} $s(x,y)$,  $x,y\in\Omega_0$, is defined as the largest positive
$n$ so that $(T^R)^jx$ and  $(T^R)^jy$ lie in the same sub-partition elements
for $0\le j<n$, i.e. $(T^R)^jx, (T^R)^jy\in\Omega_{0,i_j}$
for some $i_0,i_1,\dots,i_{n-1}$ while $(T^R)^jx$ and $(T^R)^jy$ belong to different $\Omega_{0,i}$'s.
We extend the separation function to all of $\Omega$
by putting $s(x,y)=s(F^{R-j}x,F^{R-j}y)$ for $x,y\in\Omega_{j,i}$.\\
(v) There is a finite given `reference' measure $\hat\nu$ on $\Omega_0$ which can be lifted by $F$ to a measure $\nu$ on $\Omega$: 
$$
\nu(A) = \sum_{i=1}^{\infty}\sum_{j=0}^{R_i-1} \hat\nu(F^{-j}(A\cap\Omega_{j,i})).
$$
We assume that the Jacobian $JF=\frac{d(F^{-1}_*\nu)}{d\nu}$
is  H\"older continuous in the following sense:
there exists a $\lambda\in(0,1)$ so that
$$
\left|\frac{JT^Rx}{JT^Ry}-1\right|\le C_2 \lambda^{s(\hat{T}x,\hat{T}y)}
$$
for all $x,y\in \Omega_{0,i}$, $i=1,2,\dots$.

The reference measure on $\Omega_0$ is often taken to be the Riemannian volume restricted to $\Omega_0$. If the return time $R$ is integrable with respect to  $\hat\nu$ then by~\cite{Y3}
Theorem~1 there exists an $\hat{T}$-invariant probability measure $\hat\mu$ on $\Omega_0$, which can be lifted to an $F$-invariant measure 
$\tilde{\mu}$ on $\Omega$; moreover, $\tilde{\mu}$ is absolutely continuous with respect to $\nu$. Then the pushed forward measure $\mu = (\pi_\Omega)_*\tilde{\mu}$ is a measure on $  M$ which is absolutely continuous with respect to the Riemannian volume.

Generally speaking, the conjugacy $\pi_\Omega$ need not be bijective, since the return time function $R$ may not be the first return to $\Omega_0$. However, when $R$ is the first return time, then $F$ and $T$ are indeed conjugate by $\pi_\Omega$. In this case the tower $(\Omega, F)$ is sometimes called a Rokhlin's tower.  

Now we are ready to state the theorem on the local escape rate for maps with Young's towers:

\begin{thm}\label{t.tower}
	Assume that $T$ is a $C^{2}$ map described above, such that $R$ is defined using the first return time and the reference measure is $\hat\nu=m|_{\Omega_0}$ where $m$ is the Lebesgue measure on $M$. Moreover, assume that $R$ has exponential tail: there exists $\lambda\in (0,1)$ such that 
	$$
	\hat\nu(R>n) \lesssim \lambda^n.
	$$ 
	Then we have
	$$
	\rho(x)=\lim_{r\to0}\frac{\rho_{B_r(x)}}{\mu(B_r(x))}=\begin{cases}
	1&\mbox{if $x$ is non periodic}\\
	1-\vartheta&\mbox{if $x$ is periodic}\end{cases}
	$$
	for every $x\in\interior(\Omega_0)$ provided the limit 
	$\vartheta=\lim_r\frac{\mu(T^{-m}B_r(x)\cap B_r(x))}{\mu(B_r(x))}<\frac12$ exists in the case when $x$ is a periodic
	point with minimal period $m$, for which $DT^m(x)$ has no generalised eigenvectors.
\end{thm}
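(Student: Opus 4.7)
The strategy I propose is to reduce the assertion for $T$ on $M$ to the corresponding assertion for the induced map $\hat T=T^R$ on $\Omega_0$, and then appeal to Theorem~\ref{escape.balls.exponential}. Since $R$ is the first return time and $x\in\interior(\Omega_0)$, every sufficiently small ball $B_r(x)$ lies in $\Omega_0$, so one may regard $B_r(x)$ simultaneously as a hole for $T$ and for $\hat T$. The induced system $(\hat T,\Omega_0,\hat\mu)$ with its natural Markov partition $\{\Omega_{0,i}\}$ is a Gibbs--Markov map with exponential return times: the H\"older regularity of the Jacobian in~(v) and the uniform expansion in~(iii) yield Lipschitz control of the invariant density $d\hat\mu/d\hat\nu$, and from this one obtains by standard arguments that $\hat\mu$ is right $\phi$-mixing with respect to the $\hat T$-joins of $\{\Omega_{0,i}\}$ with exponentially decaying $\phi$.

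Next I would verify the hypotheses of Theorem~\ref{escape.balls.exponential} for $(\hat T,\Omega_0,\hat\mu)$ with the generating partition just described. The $\phi$-mixing condition is already in hand with $p$ as large as needed; the diameter of the $n$-join of the partition decays exponentially because of the $\kappa$-contraction of inverse branches of $\hat T$, giving condition~(ii) with some $\zeta\in(0,1]$; the annulus assumptions~(iii) transfer from $\mu$ to $\hat\mu$ since $\hat\mu$ is equivalent to $\mu|_{\Omega_0}$ with a bounded H\"older density; the dimensional bound~(iv) is inherited from $\mu$ in the same way. If $x$ is a $T$-periodic point in $\interior(\Omega_0)$ of minimal period $m$, then $R$ is locally constant near $x$, so $\hat T$ agrees with the $C^2$ iterate $T^{R(x)}$ on a neighbourhood of $x$; hence $x$ is also $\hat T$-periodic, with some minimal period $\hat m$ satisfying $\sum_{k=0}^{\hat m-1}R(\hat T^kx)=m$, and $D\hat T^{\hat m}(x)=DT^m(x)$ has no generalised eigenvectors by hypothesis. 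Theorem~\ref{escape.balls.exponential} therefore gives
$$
\lim_{r\to 0}\frac{\hat\rho_{B_r(x)}}{\hat\mu(B_r(x))}=1-\hat\vartheta(x),
$$
where $\hat\rho$ and $\hat\vartheta$ are the escape rate and extremal index computed in the induced system. Because $\hat T^{\hat m}=T^m$ near $x$ and $\hat\mu=\mu(\Omega_0)^{-1}\mu$ on $\Omega_0$, one reads off $\hat\vartheta(x)=\vartheta(x)$.

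The main step, and the place where I expect the real work to lie, is transferring the escape rate from $\hat T$ back to $T$. For $y\in\Omega_0$ one has $\tau^T_{B_r(x)}(y)=S_{\hat\tau(y)}R(y)$, a random sum of $\hat\tau(y)$ return times; since $R$ has exponential tails under $\hat\nu$ and hence under $\hat\mu$, a Birkhoff large-deviation bound gives $S_nR/n\to\bar R:=\int R\,d\hat\mu=1/\mu(\Omega_0)$ exponentially fast in probability. This localises $\tau^T_{B_r(x)}$ around $\bar R\,\hat\tau_{B_r(x)}$ on every time scale relevant to the escape rate and yields
$$
\mathbb{P}_{\mu}\bigl(\tau^T_{B_r(x)}>t\bigr)=\mathbb{P}_{\hat\mu}\bigl(\hat\tau_{B_r(x)}>t/\bar R\bigr)(1+o(1)),
$$
whence $\rho^T_{B_r(x)}=\hat\rho_{B_r(x)}/\bar R$. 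Points in higher floors $\Omega_{j,i}$ with $j>0$ hit $\Omega_0$ within time $R_i-j$, and taller columns carry exponentially small mass by the tail assumption, so their contribution to $\mathbb{P}_\mu(\tau^T>t)$ is negligible. Combining the rate identity with Kac's identity $\mu(B_r(x))=\hat\mu(B_r(x))/\bar R$ makes the two factors of $\bar R$ cancel, so that
$$
\frac{\rho_{B_r(x)}}{\mu(B_r(x))}=\frac{\hat\rho_{B_r(x)}}{\hat\mu(B_r(x))}+o(1)\longrightarrow 1-\vartheta(x)
$$
as $r\to 0$, completing the proof.
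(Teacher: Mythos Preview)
Your proposal is correct and follows essentially the same route as the paper: apply Theorem~\ref{escape.balls.exponential} to the induced Gibbs--Markov system $(\hat T,\Omega_0,\hat\mu)$ to obtain the local escape rate there, then lift back to $T$ via the identity $\tau^T_{B_r(x)}=S_{\hat\tau}R$, a large-deviation estimate for the Birkhoff sums of $R$, the exponential tail of $R$ to control points in high floors, and Kac's formula to cancel the factor $\bar R=\mu(\Omega_0)^{-1}$. The paper carries out the lifting step by sandwiching $\{\tau^T>t\}$ between two sets $A_t^\pm$ corresponding to $\hat\tau>(1+\varepsilon)t/(\mu(\Omega_0)^{-1}\pm\varepsilon)$ and letting $\varepsilon\to0$, which is exactly the quantitative form of your asymptotic $\mathbb{P}_\mu(\tau^T>t)=\mathbb{P}_{\hat\mu}(\hat\tau>t/\bar R)(1+o(1))$.
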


\begin{remark}
	The theorem can be generalized to all $x\in M$ with $\tau_{\Omega_0}(x)<\infty$, such that $T^{\tau_{\Omega_0}(x)}(x)\in\interior (\Omega_0)$. One only need to apply the proof of Theorem~\ref{escape.balls.exponential} to the neighborhoods $T^{\tau_{\Omega_0}(x)}(B_r(x))$ of $T^{\tau_{\Omega_0}(x)}(x)\in \Omega_0$. Note that these neighborhoods are no longer balls, but the approximation argument in Theorem~\ref{escape.balls.exponential} can be adopted with minor modification. 
\end{remark}

\begin{proof}[Proof of Theorem~\ref{t.tower}]
Since $R$ is the first return map, $T$ and $F$ are conjugate. Below we will often interchange these two maps, and think of $\mu$ as a measure on the tower.
Recall that $\hat{T} = T^R$ is the induced map on $\Omega_0$, which preserves an invariant measure $\hat\mu$. Let $\hat\cA$ be the partition of $\Omega_0$ into $\Omega_{0,i}$'s. It is well known that the induced system $(\Omega_0, \hat{T}, \hat\cA,  \hat\mu)$ is exponentially $\phi$-mixing (see, for example, Lemma 2.4(b) in~\cite{MN05}). Moreover, the non-uniformly expanding condition (iii) guarantees that $\diam \hat\cA^n \lesssim \kappa^{n}$ (note that the partition $\hat\cA^n$ are defined using $\hat{T}$).

Furthermore, the invariant measure $\hat\mu$ is absolutely continuous with respect to the reference measure $\hat\nu$, where the density is indeed H\"older continuous. Since $\hat\nu = m|_{\Omega_0}$ is the Lebesgue measure on $\Omega_0$, conditions (iii) and  (iv) of Theorem~\ref{escape.balls.exponential} are satisfies. For $x\in \Omega_0, U\subset \Omega_0$, write 
$$
\hat\tau_U(x)=\inf\{j\ge1: \hat{T}^jx\in U\},
$$
$$
\hat\rho_U=\lim_{t\to\infty}\frac1t|\log\hat\mu(\hat\tau_U>t)|,
$$
and
$$
\hat\rho(x) = \lim_{r\to 0} \frac{\hat\rho(B_r(x))}{\hat\mu(B_r(x))}
$$
for the local escape rate of the induced system, we have:
\begin{proposition}\label{p.1}
Let $\hat{T}$ be the induced map on $\Omega_0$. Then
$$
\hat\rho(x)=\begin{cases}
1&\mbox{if $x$ is non-periodic}\\
1-\hat\vartheta&\mbox{if $x$ is periodic}\end{cases}
$$
for every $x\in\interior\Omega_0$ provided the limit 
$\hat\vartheta=\lim_r\frac{\hat\mu(\hat{T}^{-\hat m}B_r(x)\cap B_r(x))}{\hat\mu(B_r(x))}<\frac12$ exists in the case when $x$ is a periodic
point of $\hat{T}$ with minimal period $\hat m$, for which $D\hat{T}^{\hat m}(x)$ has no generalised eigenvectors.
\end{proposition}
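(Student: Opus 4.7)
\medskip
\noindent\textbf{Proof approach.} The plan is simply to apply Theorem~\ref{escape.balls.exponential} to the induced system $(\Omega_0,\hat{T},\hat{\mathcal{A}},\hat\mu)$. The task then reduces to verifying the hypotheses~(i)--(iv) of that theorem, since the eigenvalue condition at periodic points is already imposed directly in the statement of the proposition.

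Condition~(i) was recorded in the discussion immediately preceding the proposition: the induced system is exponentially $\phi$-mixing, so $\phi(k)\lesssim k^{-p}$ for every $p>0$ and the threshold $p>(2-\zeta)/\zeta\vee 2$ is automatic. Condition~(ii) holds with $\zeta=1$: iterating the non-uniform expansion from property~(iii) of the tower gives $\diam\hat{\mathcal{A}}^n\lesssim\kappa^n$. For conditions~(iii) and~(iv) I would use that $\hat\mu$ is absolutely continuous with respect to Lebesgue on $\Omega_0$ with a density $h$ that is H\"older continuous (as a consequence of the H\"older assumption on $JT^R$), hence bounded above on $\Omega_0$ and continuous and positive at every point of $\interior(\Omega_0)$. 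This immediately gives $\hat\mu(B_r(x))\lesssim r^{\dim M}$ (condition~(iv) with $d=\dim M$), and reduces both annulus ratios $\hat\mu(B_{r+r^w}\setminus B_{r-r^w})/\hat\mu(B_r)$ and $\hat\mu(Q_{r+r^w}\setminus Q_{r-r^w})/\hat\mu(Q_r)$ to their Lebesgue counterparts, which are $\mathcal{O}(r^{w-1})$ and tend to $0$ for any $w>1$. This is condition~(iii).

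Once these conditions are verified, Theorem~\ref{escape.balls.exponential} applied to $(\Omega_0,\hat{T},\hat\mu)$ directly delivers the claimed values of $\hat\rho(x)$ for $x\in\interior(\Omega_0)$. The main obstacle, and the reason for the restriction to interior points, is the control of the density $h$: on the boundary of the partition pieces $\Omega_{0,i}$ the density may degenerate, $\hat{T}$ can be discontinuous, and a small ball $B_r(x)$ need not even sit inside $\Omega_0$, so neither the upper bound in~(iv) nor the annulus condition~(iii) can be guaranteed uniformly. Restricting to $x\in\interior(\Omega_0)$ secures $B_r(x)\subset\Omega_0$ and $h(x)>0$ for all small $r$, which is exactly what is needed to push the conditions through and transfer the conclusion from Theorem~\ref{escape.balls.exponential} to the present proposition.
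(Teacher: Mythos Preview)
Your proposal is correct and follows exactly the route the paper takes: in the paragraph immediately preceding the proposition the authors record that the induced system is exponentially $\phi$-mixing, that $\diam\hat{\mathcal{A}}^n\lesssim\kappa^n$, and that $\hat\mu$ is absolutely continuous with H\"older density with respect to Lebesgue, so that conditions~(iii) and~(iv) of Theorem~\ref{escape.balls.exponential} hold --- Proposition~\ref{p.1} is then stated as a direct consequence. Your write-up is in fact more detailed than the paper's, which leaves the verification of (iii)--(iv) and the role of the interior assumption entirely implicit.
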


From now on we will assume that $x\in\interior\Omega_0$ is non-periodic. For each $y\in M$ with $\tau_{\Omega_0}(y)<\infty$, we take $y_0$ to be the unique point in $\Omega_0$ with
$$
y = T^{m(y)}(y_0)
$$
with $0\le m(y)<R(y_0)$. In particular, if $y\in\Omega_0$ we take $y=y_0$ and $m(y)=0$.
Then we have
\begin{equation}\label{e.0}
\tau_{B_r(x)}(y) =-m(y)+\sum_{j=0}^{\hat\tau_{B_r(x)}(y_0)-1} R(\hat{T}^j(y_0)).
\end{equation}
By the Birkhoff ergodic theorem on $(\Omega_0,\hat{T},\hat\mu)$, we see that
$$
\frac{1}{n}\sum_{j=0}^{n-1}R(\hat{T}^jy_0)\to \int_{\Omega_0} R(y)\,d\hat\mu(y) = \frac{1}{\mu(\Omega_0)},
$$
where we apply the Kac's formula on the last equality and use the fact that $\mu$ is the lift of $\hat\mu$. Motivated by the large deviation estimate on the space of H\"older functions, for every $\varepsilon>0$ and $k\in\mathbb{N}$, we define the set
$$
B_{\varepsilon, k} = \left\{y\in \Omega_0:  \left|\frac{1}{n}\sum_{j=0}^{n-1}R(\hat{T}^jy_0) - \frac{1}{\mu(\Omega_0)} \right|>\varepsilon \mbox{ for some } n\ge k\right\},
$$
then it is known that (see, for example, \cite{BDT} Appendix B):
$$
\hat\mu(B_{\varepsilon,k}) \le C_\varepsilon e^{-c_\varepsilon k}
$$
for some constants $C_\varepsilon,c_\varepsilon>0$ depending on $\varepsilon$.

On the other hand, since the return time function $R$ has exponential tail, we get, for each $\varepsilon>0$ and $t$ large enough,
$$
\mu(y\in M:m(y)>\varepsilon t)\le \mu(y_0\in\Omega_0 :R(y_0)>\varepsilon t)\lesssim\lambda^{\varepsilon t}.
$$

To simplify notation, we introduce the set $$A_t=\left\{ y:m(y)<\varepsilon t,\sum_{j=0}^{\hat\tau_{B_r(x)}(y_0)-1} R(\hat{T}^j(y_0))>(1+\varepsilon)t\right\}
\cap B^c_{\varepsilon,k}.$$ 

Combine~\eqref{e.0} with the previous  estimates on $B_{\varepsilon,k}$ and $\{y:m(y)>\varepsilon t\}$, with $k=t(1+\varepsilon)$ we get
\begin{equation}\label{e.1}
\left|\mu(\tau_{B_r(x)}>t) - \mu(A_t)\right|
\lesssim \lambda^{\varepsilon t}+e^{-c_\varepsilon k}.
\end{equation}

Note that the set $A_t$ contains 
$$
A_t^- = \left\{y:m(y)<\varepsilon t, \hat\tau_{B_r(x)}(y_0)>\frac{(1+\varepsilon)t}{\mu^{-1}(\Omega_0)-\varepsilon}\right\},
$$
and is contained in 
$$
A_t^+ = \left\{y:m(y)<\varepsilon t, \hat\tau_{B_r(x)}(y_0)>\frac{(1+\varepsilon)t}{\mu^{-1}(\Omega_0)+\varepsilon}\right\}.
$$
Now we are left to estimate $\mu(A_t^\pm)$. Since $\mu$ is the lift of $\hat\mu$, we have
\begin{align}\label{e.2}
\mu(A^\pm_t) = &\frac{1}{\hat\mu(R)}\sum_{j=0}^{\infty}\sum_{i=0}^{\min(\varepsilon t, R_j)}\hat\mu(T^{-i}A^\pm_t\cap \Omega_{0,i})\\\nonumber
=&\mu(\Omega_0)(1+\mathcal{O}(\varepsilon t))\mu_0(\hat{A}^\pm_t),
\end{align}
where
$$
\hat{A}^\pm_t = \left\{y_0\in\Omega_0: \hat\tau_{B_r(x)}(y_0)>\frac{(1+\varepsilon)t}{\mu^{-1}(\Omega_0)\pm\varepsilon}\right\}.
$$
By Proposition~\ref{p.1} and the observation that $\hat\mu(B_r(x)) \mu(\Omega_0)=\mu(B_r(x))$, we have 
$$
\lim_{r\to 0}\lim_{t\to\infty}\frac{1}{t\mu(B_r(x))}|\log\hat\mu(\hat{A}^\pm_t)| = \frac{(1+\varepsilon)}{1\pm\varepsilon\mu(\Omega_0)}.
$$
By~\eqref{e.2}, we get that 
$$
\lim_{r\to 0}\lim_{t\to\infty}\frac{1}{t\mu(B_r(x))}|\log\mu(\hat{A}^\pm_t)| = \frac{(1+\varepsilon)}{1\pm\varepsilon\mu(\Omega_0)}.
$$

For each $\varepsilon>0$ we can take $r$ small enough, such that 
$$
\frac{1+\varepsilon}{1\pm\varepsilon \mu(\Omega_0)}\mu(B_r(x)) < \min\{\varepsilon| \log\lambda|,c_\varepsilon(1+\varepsilon)\}. 
$$
It then follows that the right-hand-side of \eqref{e.1} is of order $o(\mu(A^\pm_t)).$ We thus obtain
$$
\rho(x)=\lim_{r\to 0}\lim_{t\to\infty}\frac{1}{t\mu(B_r(x))}|\log\mu(\tau_{B_r(x)}>t)| \in \left(\frac{(1+\varepsilon)}{1+\varepsilon\mu(\Omega_0)},\frac{(1+\varepsilon)}{1-\varepsilon\mu(\Omega_0)}\right)
$$
for every $\varepsilon>0$. This shows that $\rho(x)=1$ if $x$ is non-periodic.

When $x$ is periodic for the map $T$ with period $m$, it is also periodic for $\hat{T}$ with periodic $\hat m$, where $\hat m$ is less than $m$. In this case it is well known that $\vartheta = \hat\vartheta=e^{\sum_{j=0}^{m-1}\phi(T^j(x))}$,
where the potential function $\phi$ is given by $\phi(x)=-\log\det DT(x)$.  Then the same proof as before shows that $\rho(x) = 1-\hat\vartheta = 1-\vartheta$.

\end{proof}
 \section{Examples}
\subsection{Subshift of finite type}

Let us consider the special case when $\Omega$ is a subshift of 
finite type over a finite alphabet $\mathcal{A}$ and with 
transition matrix $G$ which we assume is irreducible. Then 
$\Omega=\left\{x\in\mathcal{A}^{\mathbb{N}_0}: G_{x_i,x_{i+1}}=1\forall i\ge0\right\}$
and $T:\Omega\circlearrowleft$ is the left shift map.
A function $f:\Omega\to\mathbb{R}$ is H\"older continuous if 
$|f|_\alpha=\sup_n\alpha^{-n}\var_nf$ is finite, where 
$\var_nf=\sup_{A\in\mathcal{A}^n}\sup_{x,y\in A}|f(x)-f(y)|$ is the $n$-variation of $f$.
The norm $\|f\|=|f|_\infty+|f|_\alpha$ then makes the functions space 
$C^\alpha(\Omega)=\{f: \|f\|<\infty\}$ a Banach space. A potential function
$f\in C^\alpha$ then allows one to define the transfer operator 
$\mathcal{L}:C^\alpha\circlearrowleft$ by 
$\mathcal{L}\varphi(x)=\sum_{y\in\sigma^{-1}x}e^{f(y)}\varphi(y)$
where $\sigma:\Omega\circlearrowleft$ is the shift map on $\Omega$ given
by $(\sigma x)_i=x_{i+1}\forall i$.
Its dominant eigenvalue is simple and positive real. The equilibrium state $\mu$
is then given by $\mu=h\nu$ where $h$ is its eigenfunction and $\nu$ its eigenfunctional
 (normalised so that $\nu(h)=1$) (see e.g.\ \cite{Bow,Wal}). 
On $\Omega$ one has the usual metric which defined by $d(x,y)=\alpha^{n(x,y)}$, 
 where $n(x,y)=\min\{|i|: x_i\not=y_i\}$. Sets 
 $U_n\in\sigma(\mathcal{A}^n)$ are closed-open sets. 

\begin{thm}\label{escape.subshift}
Let $\Omega$ be a subshift of finite type with alphabet $\mathcal{A}$
 and $\mu$ and equilibrium state for  a H\"older continuous function $f$.
 
 Let  $U_n\in\sigma(\mathcal{A}^n)$, $n=1,2,\dots,$ be so that
$U_{n+1}\subset U_n$, $\bigcap_{n}U_n=\{x\}$ and $\diam\,U_n\le\eta^n$ for 
some $\eta<1$ and all $n$ large enough.

If 
$$
\left\{
\begin{array}{lcl}
\mbox{either} & n^{-\gamma''}\lesssim\mu(U_n)\lesssim n^{-\gamma'}&\mbox{for some 
$1<\gamma'<\gamma''$}\\
\mbox{or}& \xi_1^n\lesssim\mu(U_n)\lesssim\xi_2^n&\mbox{for some $0<\xi_1<\xi_2<1$}
\end{array}
\right.,
$$
then
$$
\rho(x)=\left\{\begin{array}{ll}1&\mbox{ if $x$ is not periodic}\\
1-e^{f^m(x)-mP(f)}&\mbox{ if $x$ is periodic with minimal period $m$  provided $e^{f^m(x)-mP(f)}<\frac12$}
\end{array}\right.,
$$
where $P(f)$ is the pressure of $f$ and where 
$f^m=f+f\circ\sigma+\cdots+f\circ\sigma^{m-1}$
 is the $m$th ergodic sum of $f$.
\end{thm}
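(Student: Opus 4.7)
The plan is to apply Theorem~\ref{escape.phi} to the equilibrium state $\mu$ with the given family $\{U_n\}$. The classical thermodynamic formalism for an irreducible subshift of finite type with a H\"older potential $f$ supplies two ingredients: exponential $\phi$-mixing of $\mu$ (in fact $\psi$-mixing) with respect to the cylinder partition $\mathcal{A}$, and the Gibbs property
$$
C^{-1}\,e^{f^n(y)-nP(f)}\le\mu(A_n(y))\le C\,e^{f^n(y)-nP(f)}
$$
uniformly in $y\in\Omega$ and $n\in\mathbb{N}$; both follow from the spectral gap of $\mathcal{L}$ on $C^\alpha(\Omega)$ (Ruelle-Perron-Frobenius, cf.\ \cite{Bow,Wal}). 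In particular $\phi$ decays exponentially, so any lower bound on $p$ imposed by Theorem~\ref{escape.phi} in Case~(I) or~(II) is automatic.

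I would then verify that $\{U_n\}$ is an adapted neighborhood system. Properties~(1)-(3) are part of the hypothesis. Since $\diam\,U_n\le\eta^n$ and the metric on $\Omega$ is $d(y,z)=\alpha^{n(y,z)}$, one has $U_n\subset A_{n'}(x)$ with $n'=\lfloor n|\log\eta|/|\log\alpha|\rfloor$; hence for every $j\le n'$ the outer $j$-cylinder approximation $U_n^j$ reduces to the single $j$-cylinder $A_j(x)$, and the Gibbs bound yields $\mu(U_n^j)\le C\gamma^j$ with $\gamma=e^{\sup f-P(f)}<1$, which is much stronger than the polynomial requirement in Property~(4) on the range $j\le Kn$ (with $K=|\log\eta|/|\log\alpha|$). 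For Property~(5) at a periodic $x$ of minimal period $m$, I choose $J(n)=J\in(0,K)$ constant and take indices $0=i_0<i_1<\dots<i_k=um\le Jn$ that are multiples of $m$. The condition $T^{i_j}y\in U_n\subset A_{n'}(x)$ combined with the periodicity of $x$ forces $y_l=x_l$ on $[i_j,i_j+n')$; since consecutive gaps satisfy $i_{j+1}-i_j\le Jn<n'$, these intervals overlap and their union is $[0,um+n')$. Thus $\bigcap_j T^{-i_j}U_n$ is a union of $(n+um)$-cylinders whose supports agree with those of $U_{n,u}$ on $[0,um+n')$, and the Gibbs property combined with the H\"older estimate $|f^{n+um}(y)-f^{n+um}(y')|\lesssim\alpha^{n'}$ on two such representatives gives
$$
\mu\!\left(\bigcap_{j=0}^k T^{-i_j}U_n\right)=\mu(U_{n,u})\bigl(1+\mathcal{O}^*(\alpha^{n'})\bigr),
$$
which is Property~(5) with $r_n=\mathcal{O}(\alpha^{n'})\to0$.

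The final step is to identify $\vartheta(x)$ at a periodic $x$. Specializing the previous display to $\vec{i}=(0,m)$ and using the Gibbs property yields
$$
\frac{\mu(U_n\cap T^{-m}U_n)}{\mu(U_n)}\longrightarrow e^{f^m(\sigma^n x)-mP(f)}=e^{f^m(x)-mP(f)},
$$
where the last equality uses that $f^m$ is constant along the periodic orbit of $x$ by periodicity. The hypothesis $e^{f^m(x)-mP(f)}<\tfrac12$ is exactly the condition $\vartheta<\tfrac12$ required by Theorem~\ref{escape.phi}. Depending on whether $\mu(U_n)$ decays polynomially or exponentially, I then invoke Case~(I) (with $\zeta=1$, since $J(n)$ is constant) or Case~(II) of Theorem~\ref{escape.phi}; in either case the exponential decay of $\phi$ meets the required lower bound on $p$, and the theorem yields $\rho(x)=1-\vartheta(x)=1-e^{f^m(x)-mP(f)}$. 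At a non-periodic $x$, Property~(5) is vacuous, Lemma~\ref{lemma.nonperiodic} applies directly, and Theorem~\ref{escape.phi} gives $\rho(x)=1$. I expect the main obstacle to be the uniform verification of~(5) across all admissible $\vec{i}$; it is precisely the H\"older regularity of $f$ transmitted through the Gibbs property that drives the error $r_n$ to zero.
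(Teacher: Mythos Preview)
Your overall strategy mirrors the paper's: verify that $\{U_n\}$ is an adapted neighbourhood system and apply Theorem~\ref{escape.phi}, using that equilibrium states are exponentially $\psi$-mixing and satisfy the Gibbs inequality. Properties~(1)--(4) are handled correctly and in the same way as the paper.

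The gap is in Property~(5). You assert that for representatives $y,y'$ of $(n{+}um)$-cylinders in $\bigcap_jT^{-i_j}U_n$ and $U_{n,u}$ one has $|f^{n+um}(y)-f^{n+um}(y')|\lesssim\alpha^{n'}$, and that this forces the two measures to agree up to $1+\mathcal{O}(\alpha^{n'})$. But $y$ and $y'$ are only known to agree on $[0,um+n')$; for $l\ge um+n'$ the points $T^ly,T^ly'$ need not be close, so the ergodic sums can differ by $\mathcal{O}(n-n')$, not $\mathcal{O}(\alpha^{n'})$. More fundamentally, there is no bijection between the constituent cylinders of the two sets: one always has $U_{n,u}\subset\bigcap_jT^{-i_j}U_n$, and the inclusion can be strict by an exponentially large factor in measure. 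On the full $3$-shift with $\mu$ Bernoulli$(1/3)$, $x=0^\infty$, $m=1$, $n'=\lfloor n/2\rfloor$, the nested sets $U_n=A_{n'}(x)\cap\{y:y_k\in\{0,1\}\ \forall\,k\le n{-}1,\ k\text{ even}\}$ satisfy every hypothesis of the theorem (with $e^{f(x)-P(f)}=1/3<1/2$), yet $\mu(U_n\cap T^{-2}U_n)/\mu(U_{n,2})$ is of order $(3/2)^{n/4}\to\infty$, so Property~(5) fails; one also computes $\vartheta=0\ne 1/3$ here, so your identification of $\vartheta$ via Gibbs breaks down for the same reason. The paper's own verification of~(5) is a single sentence, and its Lemma~\ref{vartheta.subshift} tacitly uses $U_n\cap T^{-m}U_n=A_m(x)\cap T^{-m}U_n$, which the same example violates; so the issue is shared, but it is a genuine gap. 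As written, the argument needs either $U_n=A_n(x)$ (where $\bigcap_jT^{-i_j}U_n=U_{n,u}$ holds exactly) or an extra hypothesis guaranteeing that short returns to $U_n$ are governed by the period of $x$.
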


\noindent In particular the limit defining $\vartheta(x)$ exists for all periodic
points $x$. Let us first prove the following lemma:

 \begin{lemma}\label{diameter.to.zero}
 Let $\Omega$ be a subshift over a finite alphabet and $U_n\in\sigma(\mathcal{A}^n), n=1,2,\dots$,
be so that $U_{n+1}\subset U_n\forall n$ and  $\{x\}=\bigcap_nU_n$ for a periodic point
 $x$ with minimal period $m$.

 Then for every $u\in\mathbb{N}$ there exists an $N_u$ so that 
 $U_{n}\subset A_{mu}(x)$ for all $n\ge N_u$.
  \end{lemma}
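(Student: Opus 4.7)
The plan is to argue by contradiction, using the compactness of $\Omega$ (which holds because the alphabet is finite) together with the fact that cylinders in a subshift are clopen.

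Suppose the conclusion fails, i.e. there exists some $u \in \mathbb{N}$ and a subsequence $n_k \to \infty$ such that $U_{n_k} \not\subset A_{mu}(x)$ for every $k$. Then for each $k$ I can pick a point $y_k \in U_{n_k}$ with $y_k \notin A_{mu}(x)$. Since $\Omega$ sits inside $\mathcal{A}^{\mathbb{N}_0}$ with $\mathcal{A}$ finite, $\Omega$ is compact, so (after passing to a further subsequence which I relabel) I may assume $y_k \to y$ for some $y \in \Omega$.

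Next I would use two closedness facts. First, $A_{mu}(x)$ is an $mu$-cylinder, hence clopen, so its complement is closed and the limit $y$ still satisfies $y \notin A_{mu}(x)$; in particular $y \ne x$ since $x \in A_{mu}(x)$. Second, for any fixed $N$, the set $U_N$ is a finite union of $N$-cylinders (finite because $\mathcal{A}$ is finite and $U_N \in \sigma(\mathcal{A}^N)$), so $U_N$ is clopen, in particular closed. For $k$ large enough I have $n_k \ge N$ and therefore $y_k \in U_{n_k} \subset U_N$; closedness of $U_N$ then forces $y \in U_N$.

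Since this holds for every $N$, $y \in \bigcap_N U_N = \{x\}$, so $y = x$, which contradicts $y \ne x$. Hence for every $u$ there exists $N_u$ with $U_n \subset A_{mu}(x)$ for all $n \ge N_u$. There is no real obstacle here; the only place one has to be slightly careful is to invoke the finiteness of the alphabet both for compactness of $\Omega$ and to ensure that $U_N$ and $A_{mu}(x)$ are genuinely clopen (so that the set-theoretic limit $\bigcap_n U_n = \{x\}$ upgrades to a topological statement about the $U_n$ shrinking to $x$).
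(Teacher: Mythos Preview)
Your proof is correct and is essentially the same compactness argument as the paper's, relying on the clopen-ness of cylinders and the compactness of $\Omega$ coming from the finite alphabet. The paper phrases it directly rather than by contradiction: it defines $N(y)$ as the first $n$ with $y\notin U_n$, observes this is locally constant, and sets $N_u=\sup_{y\notin A_{mu}(x)}N(y)$, which is finite since the complement of $A_{mu}(x)$ is compact; your sequential-compactness contradiction is just a repackaging of the same idea.
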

 
 \begin{proof} Let $x$ be periodic with minimal period $m$. By the nested assumption
 and the intersection property we can find
 for every $y\not=x$ an $N(y)<\infty$ so that $y\not\in U_n\forall n\ge N(y)$.
 The function $N$ is continuous (in fact locally constant) and 
 since $A_{um}(x)\in\mathcal{A}^{um}$ is a closed-open set,
 $N_u=\sup_{y\not\in A_{um}(x)}N(y)$ is finite. Hence 
 $y\not\in U_{n,u}\forall n\ge N_u$  for all $y\not\in A_{um}(x)$.
  \end{proof}
  
 \begin{lemma}\label{vartheta.subshift}
 Let $\mu=h\nu$ be the equilibrium state for the H\"older continuous 
 function $f$, where $\nu$ is the conformal measure and $h\in C^\alpha$ 
 the associated density.
 
  Let $x\in\Omega$ be a periodic point with minimal period $m$
  and assume $U_n\in\sigma(\mathcal{A}^n)$, $n=1,2,\dots$, are so that 
  $U_{n+1}\subset U_n\,\forall n$  and $\{x\}=\bigcap_nU_n$. Then the limit 
$$
\vartheta=\lim_{n\to\infty}\frac{\mu(T^{-m}U_n\cap U_{n})}{\mu(U_n)}=e^{f^m(x)-mP(f)}
$$
exists. 
\end{lemma}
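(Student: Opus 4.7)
The plan is to combine the conformality of $\nu$ with respect to the transfer operator $\mathcal{L}$ of $f$ with the H\"older continuity of $f$ and of the density $h$ in order to reduce the computation of $\vartheta$ to a Jacobian of $T^m$ evaluated at $x$. The diameter hypothesis $\diam U_n\lesssim \eta^n$ (inherited from Theorem~\ref{escape.subshift}) is essential at the final step; Lemma~\ref{diameter.to.zero} alone is not enough, since otherwise $U_n$ could contain extra $n$-cylinders branching off from $x$ at depth much less than $n-m$, which would perturb the conformal ratio below.

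First, write $\lambda = e^{P(f)}$, so that $\mathcal{L}^*\nu=\lambda\nu$ and the Ruelle--Perron--Frobenius duality
\[
\int(\varphi\circ T^m)\,g\,d\nu=\lambda^{-m}\int\varphi\,\mathcal{L}^mg\,d\nu
\]
holds for all bounded measurable $\varphi,g$. By Lemma~\ref{diameter.to.zero} applied with $u=1$, for $n$ large we have $U_n\subset A_m(x)$; since $x$ is periodic of period $m$, $T^m$ is injective on $A_m(x)$ with a continuous inverse branch $\psi:T^m U_n\to U_n$. Taking $\varphi=\chi_{U_n}$ and $g=h\chi_{U_n}$, the preimage sum in $\mathcal{L}^m(h\chi_{U_n})(z)$ collapses to this single $\psi$-branch, yielding
\[
\mu(U_n\cap T^{-m}U_n)
=\lambda^{-m}\int_{T^mU_n\cap U_n} e^{f^m(\psi(z))}\,h(\psi(z))\,d\nu(z).
\]

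Since $\psi(z)\in U_n\to\{x\}$, H\"older continuity of $f$ (hence of $f^m$) and of $h$ gives $e^{f^m(\psi(z))}=e^{f^m(x)}(1+o(1))$ and $h(\psi(z))=h(x)(1+o(1))$, uniformly in $z$. Combining this with $\mu(U_n)=h(x)\nu(U_n)(1+o(1))$, we obtain
\[
\frac{\mu(U_n\cap T^{-m}U_n)}{\mu(U_n)}
=e^{f^m(x)-mP(f)}\cdot\frac{\nu(U_n\cap T^mU_n)}{\nu(U_n)}\,(1+o(1)).
\]

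The main obstacle is to show that the last conformal ratio tends to $1$. Under the diameter condition there is $c\in(0,1]$ with $U_n\subset A_{cn}(x)$ for all large $n$, and eventually $cn\ge n-m$. Consequently every $n$-cylinder $C\subset U_n$ satisfies $C\subset A_{cn}(x)\subset A_{n-m}(x)=T^m A_n(x)\subset T^m U_n$ (using $A_n(x)\subset U_n$), so $U_n\subset T^m U_n$ and the ratio equals $1$ exactly for all such $n$. Plugging back yields $\vartheta=e^{f^m(x)-mP(f)}$ as claimed.
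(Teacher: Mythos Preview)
Your transfer-operator reduction is correct and essentially parallels the paper's argument: the paper works cylinder by cylinder and uses the Gibbs/conformal relation $\nu(A_{n+m}(\tilde x_k)) = e^{f^m(\tilde x_k)+\mathcal{O}(\alpha^n)}\nu(A_n(x_k))$ together with $h\in C^\alpha$, which is exactly your duality formula unwound on each $n$-cylinder of $U_n$. Both routes reduce to the same crux, namely that the inverse branch $\psi$ of $T^m$ fixing $x$ satisfies $\psi(U_n)\subset U_n$ (equivalently your $U_n\subset T^mU_n$, equivalently the paper's identification $U_{n,1}=\bigcup_kA_{n+m}(\tilde x_k)$).

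Your justification of this step, however, contains an arithmetic error. You assert that the diameter hypothesis yields $U_n\subset A_{cn}(x)$ with $c\in(0,1]$ and that ``eventually $cn\ge n-m$''. This is false whenever $c<1$: then $n-cn=(1-c)n\to\infty$, so in fact $cn<n-m$ for all large $n$. In the metric $d(y,z)=\alpha^{s(y,z)}$ the bound $\diam U_n\le\eta^n$ gives $c=\log\eta/\log\alpha$, which is $<1$ precisely when $\eta>\alpha$, and nothing in Theorem~\ref{escape.subshift} excludes this. Thus your chain $U_n\subset A_{cn}(x)\subset A_{n-m}(x)=T^mA_n(x)\subset T^mU_n$ breaks at the second inclusion, and the ratio $\nu(U_n\cap T^mU_n)/\nu(U_n)=1$ is left unproved. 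You are right that this is the ``main obstacle''; indeed the paper itself writes $U_{n,1}=\bigcup_kA_{n+m}(\tilde x_k)$ without explicitly checking the inclusion $\psi(U_n)\subset U_n$, so you have correctly isolated the delicate point---but your proposed fix does not close it.
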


\begin{proof} Without loss of generality we  can assume that $P(f)=0$,
 otherwise we replace $f$ by $f-P(f)$ which has zero pressure
 and has the same equilibrium state.
Note that for an $n$-cylinder $A_n\in\mathcal{A}^n$ one has
 $\nu(A_n)=\int \chi_{A_n}(x)\,d\nu(x)=\int e^{f^n(A_nx)}\,d\nu(x)$.
We treated $A_n$ as a word of length $n$ and with $A_nx$ we
mean the concatenation of $A_n$ with $x$ (or the point $T^{-n}x\cap A_n$).
 By the same token we obtain for 
an $n+m$ word $A_{n+m}\in\mathcal{A}^{n+m}$ that
$$
\nu(A_{n+m})=\int e^{f^{n+m}(A_{n+m}x)}\,d\nu(x)
=e^{f^m(A_{n+m}x)+\mathcal{O}(\alpha^n)}\int e^{f^{n}(A_{n}y)}\,d\nu(y)
$$
for any $x$ for which $A_{n+m}x\in\Omega$. 
This follows from the regularity of $f$ which implies that
$f^{n+m}(A_{n+m}y)=f^{m}(A_{n+m}x)+\mathcal{O}(\alpha^n)+f^{n}(A_{n}y)$.
Consequently
$$
\nu(A_{n+m})=e^{f^m(A_{n+m}x)+\mathcal{O}(\alpha^n)}\nu(A_n).
$$
Let $x$ be periodic with minimal period $m$
and $U_n\in\sigma(\mathcal{A}^n)$, $n=1,2,\dots$, be a neighbourhood system of $x$. 
For points $x_k$ one then writes $U_n=\bigcup_kA_n(x_k)$
(disjoint union) and obtains
$$
U_{n,1}=A_m(x)\cap \bigcup_kA_n(x_k)
=\bigcup_kA_{n+m}(\tilde{x}_k),
$$
where $\tilde{x}_k$ denotes the unique point $T^{-m}x_k\cap A_m(x)$.
According to Lemma~\ref{diameter.to.zero} 
for every $u\in\mathbb{N}$ there exists $N_u$ so that 
$U_n\subset A_{um}(x)\forall n\ge N_u$. This implies that $x_k\in A_{um}(x)$ for all $k$.
Then
\begin{eqnarray*}
\mu(A_{n+m}(\tilde{x}_k))&=&\nu(A_{n+m}(\tilde{x}_k))(h(\tilde{x}_k)+\mathcal{O}(\alpha^{n+m}))\\
&=&e^{f^m(\tilde{x}_k)+\mathcal{O}(\alpha^n)}\nu(A_{n}(x_k))(h(x_k)+\mathcal{O}(\alpha^{um}))\\
&=&e^{f^m(x)+\mathcal{O}(\alpha^{(u-1)m})}\mu(A_{n}(x))
\end{eqnarray*}
using the periodicity of $x$ and the fact that $h\in C^\alpha$  implies
$h(\tilde{x}_k)=h(x)+\mathcal{O}(\alpha^{(u+1)m})=h(x_k)+\mathcal{O}(\alpha^{um})$.
Hence
$$
\frac{\mu(U_{n,1})}{\mu(U_n)}
=\frac{\sum_ke^{f^m(x)+\mathcal{O}(\alpha^{(u-1)m})}\mu(A_{n}(x))}{\sum_k\mu(A_{n}(x))}
=e^{f^m(x)+\mathcal{O}(\alpha^{(u-1)m})}
$$
and consequently the limit 
$$
\vartheta=\lim_{n\to\infty}\frac{\mu(U_{n,1})}{\mu(U_n)}
=\lim_{u\to\infty}e^{f^m(x)+\mathcal{O}(\alpha^{(u-1)m})}
=e^{f^m(x)}
$$
exists. 
\end{proof}

\begin{proof}[Proof of Theorem~\ref{escape.subshift}.]
Let us first point out that if  $x$ is a periodic point with minimal period $m$
then by Lemma~\ref{vartheta.subshift} the limit $\vartheta=e^{f^m(x)-mP(f)}$ exists.

It remains to verify Properties~(4) and~(5).
For the shift space $\Omega$ with the standard metric (which is given by 
 $d(x,y)=a^{n(x,y)}$ for $x,y\in\Omega$, where $n(x,y)=\min\{j\ge0: x_j\not=y_j\}$ and $a\in(0,1)$ is arbitrary),
 one has
$\diam\, A=\alpha^n\;\forall A\in\mathcal{A}^n$. Since by assumption
$\diam\,U_n\le \eta^n$ for all $n$ large enough, we conclude that
$U_n\subset A_j(x)$ for all $j\le Jn$ where $J=\frac{\log\eta}{\log\alpha}$
is independent of $n$.
Since  $\mu$ is in fact $\psi$-mixing (see e.g.~\cite{Bow})
at an exponential rate, 
it is also left and right $\phi$-mixing at an exponential rate. 
If $x$ is periodic with period $m$, 
then $U_{n,u}^j=A_j(U_{n,u})\subset A_j(x)$ 
for $j\le J(n+um)$. Since the measure of $j$-cylinders
decays exponentially we obtain that $\mu(U_{n,u}^j)\le\gamma^j$
for some $\gamma<1$ and all $j\le J(n+um)$.
The same argument also yields Property~(5).

The statement of the theorem now follows from an application of
Theorem~\ref{escape.phi}~(I) in the case when $\mu(U_n)$ decays 
polynomially and from Theorem~\ref{escape.phi}~(II)
in the case of exponential decay of $\mu(U_n)$.
\end{proof}


\subsection{Gibbs-Markov maps}
We consider a Gibbs-Markov map $T$ on a Lebesgue space $(X, \mu)$. Recall that a map $T$ is called {\em Markov} if there is a countable measurable partition $\cA$ on $X$ with $\mu(A)>0$ for all $A\in \cA$, such that for all $A\in \cA$, $T(A)$ is injective and can be written as a union of elements in $\cA$. Write $\cA^n=\bigvee_{j=0}^{n-1}T^{-j}\cA$ as before, it is also assumed that $\cA$ is (one-sided) generating.

Fix any $\lambda\in(0,1)$ and define the metric $d_\lambda$ on $X$ by $d_\lambda(x,y) = \lambda^{s(x,y)}$, where $s(x,y)$ is the largest positive integer $n$ such that $x,y$ lie in the same $n$-cylinder. Define the Jacobian $g=JT^{-1}=\frac{d\mu}{d\mu\circ T}$ and $g_k = g\cdot g\circ T \cdots g\circ T^{k-1}$.

The map $T$ is called {\em Gibbs-Markov} if it preserves the measure $\mu$, and also satisfies the following two assumptions:\\
(i) The big image property: there exists $C>0$ such that $\mu(T(A))>C$ for all $A\in \cA$.\\
(ii) Distortion: $\log g|_A$ is Lipschitz for all $A\in\cA$.

For example, if $T$ is modeled by Young's tower with a base $\Omega_0$, then the return map $\hat{T} = T^R:\Omega_0\to\Omega_0$ is a Gibbs-Markov map with respect to the invariant measure $\mu|_{\Omega_0}=(h\nu)|_{\Omega_0}$ and the partition $\{\Omega_{0,i}\}$, since $\hat{T}(\Omega_{0,i}) =\Omega_0$.

In view of (i) and (ii), there exists a constant $D>1$ such that for all $x,y$ in the same $n$-cylinder, we have the following distortion bound:
$$
\left|\frac{g_n(x)}{g_n(y)}-1\right|\le D d_\lambda(T^nx,T^ny),
$$
and the Gibbs property:
$$
D^{-1}\le \frac{\mu(A_n(x))}{g_n(x)}\le D.
$$
It is well known (see Lemma 2.4(b) in~\cite{MN05}) that Gibbs-Markov systems are exponentially $\phi$-mixing. Therefore we have the following corollary of Theorem~\ref{escape.phi}:

\begin{thm}
	Let $T$ be a Gibbs-Markov map on $(X,\mu)$. With $U_n = A_n(x)$, we have	
	$$
	\rho(x)=\left\{\begin{array}{ll}1&\mbox{ if $x$ is not periodic}\\
	1-g_m(x)&\mbox{ if $x$ is periodic with minimal period $m$  provided $g_m(x)<\frac12$}
	\end{array}\right.
	$$
\end{thm}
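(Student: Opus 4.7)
The plan is to apply Theorem~\ref{escape.phi}, Case~(II), to the neighbourhood system $U_n = A_n(x)$. Since Gibbs-Markov systems are exponentially (right) $\phi$-mixing with respect to $\mathcal{A}$ (as cited from \cite{MN05}), the rate $\phi(k)$ decays faster than any polynomial, so the hypothesis on $\phi$ is trivially met. The Gibbs bound $D^{-1} g_n(x) \le \mu(A_n(x)) \le D g_n(x)$ combined with $g$ being bounded above by a constant $<1$ on each atom yields $\xi_1^n \lesssim \mu(U_n) \lesssim \xi_2^n$ for suitable $\xi_1 < \xi_2 \in (0,1)$, placing us in the exponential case.

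Properties~(1)--(3) of an adapted neighbourhood system are immediate for $U_n = A_n(x)$. For Property~(4), in the right $\phi$-mixing formulation $U_n^j = A_j(U_n) = A_j(x)$ whenever $j \le n$; the Gibbs bound yields exponential decay of $\mu(U_n^j)$ in $j$, hence $\mu(U_n^j) \lesssim j^{-\gamma'}$ for any $\gamma' > 1$. The key verification is Property~(5): I would fix $x$ periodic with minimal period $m$ and take $J(n) \equiv J \in (0,1)$ constant. For any admissible $0 = i_0 < i_1 < \cdots < i_k \le Jn$ with each $i_j$ a multiple of $m$, every gap $i_{j+1} - i_j$ is at most $i_k \le Jn < n$, so the index intervals $[i_j, i_j+n)$ cover $[0,i_k+n)$ contiguously. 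Exploiting $T^{i_j} x = x$ for each $j$, one obtains
\[
\bigcap_{j=0}^k T^{-i_j} A_n(x) = A_{n+i_k}(x) = U_{n,\, i_k/m}
\]
as an exact identity, so Property~(5) holds with $r_n \equiv 0$.

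To identify $\vartheta(x)$, I would specialise to $k = 1$, $i_1 = m$ and obtain $U_n \cap T^{-m} U_n = A_{n+m}(x)$. Combining the Gibbs estimate $\mu(A_n(x)) \asymp g_n(x)$ with the distortion inequality, together with the observation that for $x$ periodic of period $m$ the image cylinder $T^n A_n(x) \in \mathcal{A}$ depends only on $n \bmod m$ (since it is determined by the last symbol $[T^{n-1} x]$ of $A_n(x)$), while $g_m(T^n x) = g_m(x)$ follows from periodicity and the chain rule, one concludes
\[
\vartheta(x) = \lim_{n \to \infty} \frac{\mu(A_{n+m}(x))}{\mu(A_n(x))} = g_m(x).
\]
This identification of $\vartheta$ is the main technical step, since the bare Gibbs property supplies only two-sided bounds on the ratio, and the exact limit requires the refined asymptotic behaviour of Gibbs-Markov measures along a periodic orbit (concretely, the cancellation between the densities and the image-cylinder measures along the cycle). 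With all hypotheses of Theorem~\ref{escape.phi}(II) verified, the conclusion $\rho(x) = 1 - g_m(x)$ for periodic $x$ (under $g_m(x) < \tfrac12$) and $\rho(x) = 1$ for non-periodic $x$ follows directly.
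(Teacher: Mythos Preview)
Your proposal is correct and follows essentially the same route as the paper: both apply Theorem~\ref{escape.phi} in the exponential case to the cylinders $U_n=A_n(x)$, using the exponential $\phi$-mixing of Gibbs--Markov systems and the Gibbs bounds on cylinder measures, and then identify $\vartheta(x)=g_m(x)$ at periodic points. Your verification of Property~(5) as an exact identity (with $r_n\equiv 0$) is more explicit than the paper's, which simply asserts that the cylinders satisfy (1)--(5); for the computation of $\vartheta$, the paper just invokes the argument of Lemma~\ref{vartheta.subshift} with $f=\log g$ and $P(\log g)=0$, which supplies the H\"older-density/conformal-measure decomposition that your sketch (``cancellation between the densities and the image-cylinder measures along the cycle'') alludes to but does not fully spell out.
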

\begin{proof}
	Recall that the $n$-cylinders $A_n(x)$ have exponentially small measure and satisfy the conditions (1) to (5) of the adapted neighborhood system. Then Theorem~\ref{escape.phi} gives
	$\rho(x) = 1-\vartheta(x)$, provided that $\vartheta(x)<\frac12$. 
	
	We are left to check that $\vartheta(x) = g_m(x)$. The proof is essentially the same as Lemma~\ref{vartheta.subshift} with $f = \log g$ (thus $e^{f^m(x)} = g_m(x)$). Also note that $P(\log g)=\log\sigma(\mathcal{L})=0$, where $\mathcal{L}$ is the transfer operator with respect to $f$ and $\sigma(\mathcal{L})$ is the spectral radius of $\mathcal{L}$; see for example \cite[Corollary 2.3]{MN05}.
\end{proof}

\subsection{Interval maps} As an example we consider interval maps modeled by Young's tower. Examples include uniform expanding piecewise $C^2$-map of the 
unit interval $I=[0,1]$ with the Markov property and certain unimodal maps.
Let $T:I\circlearrowleft$ be such a map, then it has an absolutely 
continuous invariant measure $\mu$ which has a positive density
$h$ with respect to Lebesgue measure $\lambda$ (see e.g.~\cite{GB} and~\cite{Y3}).
The following result was proven in~\cite{BY11} Theorem~4.6.1 for the doubling
map $T(x)=2x\mod 1$ on the unit interval. There the periodic points $x$ of
minimal period $m$ have dyadic expansion $x=0.\overline{x_1x_2\cdots x_m}$,
$x_i\in\{0,1\}$ and where the $m$-word $x_1\cdots x_m$ is not generated by
repeating a shorter word. Then it was shown that $\rho(x)=1-2^{-m}$. A similar result was also obtained  in~\cite{BDT} for general interval maps using inducing scheme, provided that the return time function $R$ has exponential tail and satisfies an exponential large deviation estimate.

\begin{thm}
Let $T$ be an $C^{1+\alpha}$ map on the unit interval which can be modeled by Young's tower with exponential tail, i.e., $\lambda(\hat{R}>n) \lesssim \eta^n$ for some $\eta\in(0,1)$. Here $\lambda$ is the Lebesgue measure on $[0,1]$. Let $\mu$ be absolutely 
continuous invariant measure with density $h(x)$ w.r.t. $\lambda$. 

The local escape rate is then
$$
\rho(x)
=\lim_{r\to0}\frac{\rho_{B_r(x)}}{\mu(B_r(x))}
=\begin{cases}1&\mbox{if $x$ is non-periodic,}\\
1-\frac1{|(T^m)'(x)|}&\mbox{if $x$ is periodic with minimal period $m$  } \\ &
\mbox{with $h(x)>0$ and provided $|(T^m)'(x)|>2$.}\end{cases}
$$
\end{thm}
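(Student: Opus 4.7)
The plan is to apply Theorem~\ref{t.tower} to the interval map $T$; since the map is assumed to admit a Young tower with exponential tail and the interval is $1$-dimensional (so $DT^m(x)$ is just the scalar $(T^m)'(x)$ and trivially has no generalized eigenvectors), the general tower theorem reduces the problem to identifying $\vartheta(x)$. For non-periodic $x$ Theorem~\ref{t.tower} immediately gives $\rho(x)=1$. For periodic $x$ of minimal period $m$ with $h(x)>0$, I need to (i) show that the limit defining $\vartheta(x)$ exists, (ii) compute $\vartheta(x)=1/|(T^m)'(x)|$, and (iii) observe that the hypothesis $|(T^m)'(x)|>2$ is exactly the condition $\vartheta(x)<\frac12$ required to invoke Theorem~\ref{t.tower}.

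The computation of $\vartheta(x)$ is the main substantive point. Because $T$ is $C^{1+\alpha}$, the iterate $T^m$ is $C^{1+\alpha}$ in a neighbourhood of $x$ with $T^m(x)=x$ and derivative $(T^m)'(x)$ of modulus $>2>0$. Hence on a sufficiently small interval $B_r(x)=(x-r,x+r)$, $T^m$ is a $C^{1+\alpha}$ diffeomorphism onto its image, and
$$
T^{-m}(B_r(x))\cap B_r(x)=\bigl(x-r/|(T^m)'(x)|+o(r),\;x+r/|(T^m)'(x)|+o(r)\bigr),
$$
by the inverse function theorem together with $C^{1+\alpha}$ control on the derivative. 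Since $\mu=h\,d\lambda$ with $h$ continuous and positive at $x$ (the density of a Young-tower a.c.i.m.\ is bounded and bounded below away from $0$ on the essential support, and $h(x)>0$ is assumed), Lebesgue differentiation gives
$$
\mu(B_r(x))=(1+o(1))\,2rh(x),\qquad
\mu(T^{-m}B_r(x)\cap B_r(x))=(1+o(1))\,\tfrac{2r}{|(T^m)'(x)|}h(x),
$$
as $r\to 0$. Dividing yields
$$
\vartheta(x)=\lim_{r\to 0}\frac{\mu(T^{-m}B_r(x)\cap B_r(x))}{\mu(B_r(x))}=\frac{1}{|(T^m)'(x)|},
$$
which exists, and the assumption $|(T^m)'(x)|>2$ guarantees $\vartheta(x)<\frac12$. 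Theorem~\ref{t.tower} then gives $\rho(x)=1-\vartheta(x)=1-1/|(T^m)'(x)|$.

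The main obstacles, such as they are, lie not in the $\vartheta$-calculation but in checking the hypotheses of Theorem~\ref{t.tower}. The smoothness assumption there is $C^2$, while here we only have $C^{1+\alpha}$; however the only use of $C^2$ in Theorem~\ref{escape.balls.exponential}, which underlies Theorem~\ref{t.tower}, is to control the discrepancy between $T^m$ and its linearization on small balls near $x$, and in dimension one $C^{1+\alpha}$ suffices for exactly the same argument (with $r^2$ replaced by $r^{1+\alpha}$ and the exponent $w\in(1,1+\alpha)$ chosen accordingly). The remaining hypotheses (that the density is continuous and positive at $x$ so the annulus property of Condition~(iii) holds, and that Properties~(1)–(5) are inherited by the neighbourhood systems $U_n^\pm$ approximating $B_r(x)$) follow exactly as in Theorem~\ref{t.tower}, using the exponential tail of $R$ and the Gibbs–Markov character of the induced map on the tower base. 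With these checks in place the conclusion is immediate.
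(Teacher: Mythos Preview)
Your proposal is correct and follows essentially the same route as the paper: both invoke Theorem~\ref{t.tower}, observe that in dimension one $DT^m(x)=(T^m)'(x)$ trivially has no generalised eigenvectors, and then compute $\vartheta(x)=1/|(T^m)'(x)|$ at a periodic point via the local diffeomorphism structure of $T^m$ and absolute continuity of $\mu$ with $h(x)>0$. Your additional remark that $C^{1+\alpha}$ suffices (with $r^{1+\alpha}$ replacing $r^2$ in the linearisation error) is a useful observation that the paper does not spell out.
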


\noindent Note that the expression for periodic points matches 
Theorem~\ref{escape.subshift} if one sets $f=-\log|T'|$. This 
function has zero pressure, i.e. $P(f)=0$. The absolutely continuous
measure $\mu$ is then a Gibbs state for $f$.

\begin{proof} This follows immediately from Theorem~\ref{t.tower}. We only have to determine $\vartheta$.

Since $\mu$ is absolute continuous with respect to the Lebesgue measure $\lambda$, we have that $D=1$. 
The assumption that at periodic points $DT^m(x)=(T^m)'(x)$ has no generalised eigenvectors is clearly 
satisfied as we are in dimension one.

 To prove the existence of the limit $\vartheta$ for periodic points,
 let $x$ be a periodic point with minimal period $m$, such that $h(x)>0$. 
 Then $\mu(B_r(x))=\lambda(B_r(x))(h(x)+o(1))$ as $r\to0$, 
 For all small enough $r>0$, $T^{-m}B_r(x)\cap B_r(x)=T^{-m}B_r(x)\subset B_r(x)$
 and therefore 
 $$
 \mu(T^{-m}B_r(x)\cap B_r(x))=\mu(T^{-m}B_r(x))=\frac1{|(T^m)'(x)|}\lambda(B_r(x))(h(x)+o(1))
 $$
 as $r\to0$. Thus
 $$
 \vartheta(x)=\lim_{n\to\infty}\frac{ \mu(T^{-m}B_r(x)\cap B_r(x))}{\mu(B_r(x))}
 =\frac1{|(T^m)'(x)|}.
 $$

 \end{proof}

\subsection{Conformal repeller} This example was covered in~\cite{FP} and 
deals with $C^1$-maps $T$ on Riemannian manifolds $M$. A conformal repeller
is then a maximal compact set $\Omega\subset M$ so that $T$ acts conformally
on $\Omega$ and is expanding, that is there exists a $\beta>1$ so that
 $|DT^kv|\ge \beta^k$ for all large enough $k$ and all $v\in T_xM\forall x\in\Omega$.

\begin{thm}\label{escape.conformal.repeller}
Let $\Omega\subset M$ be a conformal repeller for the $C^1$-map $T:M\circlearrowleft$
and let $\mu$ be an equilibrium state for a H\"older continuous potential 
$f:\Omega\to\mathbb{R}$.

The local escape rate for metric balls is then
$$
\rho(x)=\lim_{r\to0}\frac{\rho_{B_r(x)}}{\mu(B_r(x))}=\begin{cases}1&\mbox{if $x$ is non-periodic}\\
1-e^{f^m(x)-mP(f)}&\mbox{if $x$ is periodic with minimal period $m$}\\&
\mbox{provided $e^{f^m(x)-mP(f)}<\frac12$} \end{cases}
$$
\end{thm}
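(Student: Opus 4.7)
The strategy is to reduce the statement to Theorem~\ref{escape.balls.exponential} and then identify the extremal index $\vartheta(x)$ at periodic points using the Gibbs property together with conformality. A conformal repeller admits a Markov partition $\mathcal{R}$ which produces a H\"older coding $\pi:\Sigma_G\to\Omega$ by a subshift of finite type; the equilibrium state $\mu$ is the push-forward of the equilibrium state for $f\circ\pi$, hence is exponentially $\psi$-mixing and in particular right $\phi$-mixing at exponential rate, verifying hypothesis~(i) with any polynomial bound on $\phi$. The uniform expansion $|DT^k|\ge\beta^k$ forces $\diam\,\mathcal{A}^n\lesssim\beta^{-n}$, giving~(ii) with $\zeta=1$. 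The standard Gibbs estimate $\mu(B_r(y))\asymp e^{f^n(y)-nP(f)}$ whenever $r\asymp|(T^n)'(y)|^{-1}$ yields $\mu(B_r(y))\lesssim r^d$ for some $d>0$, verifying~(iv).

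For the annulus hypothesis~(iii), the doubling property of the Gibbs measure $\mu$ on a conformal repeller — which follows from bounded distortion of $T^n$ on Markov pieces of the appropriate generation — gives $\mu(B_{r+r^w}(x))/\mu(B_r(x))\to 1$. The auxiliary sets $Q_r(x)$ appearing in Theorem~\ref{escape.balls.exponential} are, at periodic $x$, comparable to ordinary balls: because $T$ is conformal, $DT^m(x)$ equals $|(T^m)'(x)|$ times an orthogonal transformation, so every eigenvalue has the same modulus $|(T^m)'(x)|>1$. This immediately implies that $DT^m(x)$ is semisimple (no generalised eigenvectors) and that $Q_r(x)$ is a ball up to a fixed aspect ratio, so the $Q$-annulus condition reduces to the ball case.

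It remains to identify $\vartheta(x)$ at a periodic point $x$ of minimal period $m$. Conformal expansion ensures that for all sufficiently small $r$ the branch of $T^{-m}$ fixing $x$ maps $B_r(x)$ strictly into itself, so $T^{-m}B_r(x)\cap B_r(x)=T^{-m}B_r(x)$. The Gibbs property applied to this ball, combined with H\"older continuity of $f$ (so that $f^m$ is nearly constant on the small ball $T^{-m}B_r(x)$ near $x$), yields
\[
\frac{\mu(T^{-m}B_r(x))}{\mu(B_r(x))}=e^{f^m(x)-mP(f)}+o(1)\qquad(r\to 0),
\]
exactly as in Lemma~\ref{vartheta.subshift} but with balls replacing cylinders. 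Hence $\vartheta(x)=e^{f^m(x)-mP(f)}$, and under the assumption $\vartheta(x)<\frac12$ the value $\rho(x)=1-\vartheta(x)$ follows from Theorem~\ref{escape.balls.exponential}. For non-periodic $x$ the same verification of~(i)-(iv) gives $\rho(x)=1$.

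The main technical point I anticipate is the box-annulus comparison needed to establish Property~(5) for the cylinder approximations $U_n^\pm$ of $B_r(x)$ at a periodic point; this is exactly where conformality enters decisively, since $DT^m(x)$ being a scalar times an isometry makes the iterated preimages $\bigcap_j T^{-jm}B_r(x)$ genuinely ball-like (up to $C^1$-error), so that the annulus argument built into the proof of Theorem~\ref{escape.balls.exponential} applies without the need for the more delicate box constructions required in the general non-conformal setting.
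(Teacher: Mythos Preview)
Your approach is essentially the paper's, with one structural difference: you invoke Theorem~\ref{escape.balls.exponential} directly (using that conformality makes $DT^m(x)$ a scalar multiple of an isometry, hence semisimple), whereas the paper invokes Theorem~\ref{escape.balls} and verifies Property~(5) by hand. The paper's direct route is slightly cleaner here because conformality gives $A^{-k}B'_r(0)=(a^m(x))^{-k}B'_r(0)$ exactly, so the iterated intersections are genuine balls and the annulus estimate is immediate---no box sets $Q_r$ are needed at all. Your route through Theorem~\ref{escape.balls.exponential} is valid, but note that the $Q_r$ construction there is phrased for real eigenspaces, so in higher dimensions where the orthogonal part of $DT^m(x)$ may have complex eigenvalues you would still want to fall back on the ball argument; your remark that ``$Q_r(x)$ is a ball up to a fixed aspect ratio'' is the right observation but deserves a line of justification.

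One small gap: the doubling property alone does \emph{not} give $\mu(B_{r+r^w}(x))/\mu(B_r(x))\to 1$; doubling only bounds the ratio by a constant. What is needed is the annular decay property, which the paper obtains by citing that equilibrium states on conformal repellers are diametrically regular~\cite{PW97} and that diametric regularity implies annular decay~\cite{Bu99}. Your bounded-distortion sketch is on the right track but stops short of this. Similarly, for hypothesis~(iv) the paper simply cites~\cite{FP} Lemma~6.4, and for the value of $\vartheta$ at periodic points it invokes Lemma~\ref{vartheta.subshift} on the symbolic side rather than arguing directly with balls; your ball argument is fine but implicitly uses the same Gibbs/bounded-distortion input.
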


\begin{proof}  We use the fact that  $\Omega$ allows Markov partitions $\mathcal{A}$ 
of arbitrarily small diameter. Let $\mathcal{A}$ be a generating Markov partition and
we verify the assumptions of Theorem~\ref{escape.balls}:\\
 (i) Is satisfied because the equilibrium state $\mu$
 is $\psi$-mixing with respect to the partition $\mathcal{A}$ and therefore
 also right and left $\phi$-mixing where $\psi$ (and therefore $\phi$) decays
  exponentially fast.\\
 (ii) This follows from expansiveness: $\diam \,\mathcal{A}^n=\mathcal{O}(\eta^n)$
 with $\eta=\frac1\beta<1$ and $\zeta=1$. \\
 (iii) Is satisfied for any $w>1$ as $\mu$ is diametrically regular~\cite{PW97} and
 thus also has the annular decay property~\cite{Bu99}. This yields 
 $\frac{\mu(B_{r+r^w}(x)\setminus B_r(x))}{\mu(B_r(x))}=\mathcal{O}(r^{(w-1)\delta})\to0$
 for some $\delta>0$. \\
 (iv) Is satisfied with some $d>0$ by~\cite{FP} Lemma~6.4.\\
  The existence of the limit $\vartheta$ for periodic points follows from 
 Lemma~\ref{vartheta.subshift}.
  
In order to verify Property~(5) let us note that conformality
 is that $DT(x)=a(x)I_x$ where $a(x)\ge\beta$ is the dilation function and $I_x$ is 
 an isometry. If $x$ is periodic with minimal period $m$, 
 then $DT^m(x)=a^m(x)I_x^m$ maps the tangent space $T_xM$ to itself
 with $x$ fixed point ($a^m(x)=a(x)a(Tx)\cdots a(T^{m-1}x)$). 
 In particular $A=DT^m(x)$ has no generalised eigenvector and moreover for all $k$
one has $B'_r(0)\cap A^{-k}B'_r(0) =\bigcap_{j=0}^kA^{-j}B'_r(0)=(a^{m}(x))^{-k}B'_r(0)$
  is just the ball  $B'_r(0)$ scaled by the factor $(a^{m}(x))^{-k}$.
  
  Let $w\in(1,2)$ and as in  Theorem~\ref{escape.balls} we obtain
 $\diam\,\mathcal{A}^n<r^w$ provided $n> w\frac{\log r}{\log\eta}$.

With the exponential map $\Phi_x$ at $x$, we get $B_r(x)=\Phi_x(B'_r(0))$
and, as before, put  $B^{(k)}_r(x)=\bigcap_{j=0}^kT^{-jm}B_r(x)$ and 
 $B'^{(k)}_r(x)=\bigcap_{j=0}^kA^{-j}B'_r(x)$.
Since $d(T^{jm}y,\Phi_xA^j\Phi_x^{-1}y)\lesssim r^2\:\forall y\in B^{(k)}_r(x)$
we conclude
$$
\mathcal{D}=\Phi_x(B'^{(k)}_r(0))\triangle B^{(k)}_r(0)
\subset B_{r'}\!\left(\Phi_x\!\left(\partial B'^{(k)}_r(0)\right)\right)
\subset B_{r''}(\partial B_{(a^m(x))^{-k}r}(x))
$$
for some $r''\lesssim r'\lesssim r^2$.
Naturally we need $ ((a^m(x))^{-k}r)^w\gtrsim r'$ which is achieved since 
 $r^w\ge\eta^{n}\ge\diam\,\mathcal{A}^n$ implies  that $r^2\le(a^m(x))^{-Jw\frac{n}m}r^w$ 
 is satisfied.
 Consequently we must choose 
 $$
 J<m\frac{|\log\eta|}{\log a^m(x)}\frac{2-w}{w^2}.
 $$
  We  therefore obtain by~(iii) above: 
\begin{eqnarray*}
\frac{\mu(\mathcal{D})}{\mu(\Phi_x(B'^{(k)}_r(0)))}
&\le&\frac{\mu(B_{(a^m(x))^{-k}r+((a^m(x))^{-k}r)^w}\setminus B_{(a^m(x))^{-k}r-((a^m(x))^{-k}r)^w}(x_j))}
{\mu(B_{(a^m(x))^{-k}r}(x_j))}\\
&\lesssim&((a^m(x))^{-k}r)^{(w-1)\delta}
\longrightarrow0
\end{eqnarray*}
 as $r$ goes to zero.
 
We have thus shown that if we have indices $0=i_0<i_1<\cdots<i_k\le Jn$ which are multiples of $m$,
then
$$
\mu\!\left(\bigcap_{j=0}^kA^{-i_j/m}B'_r(0)\right)
=\mu\!\left(\bigcap_{j=0}^kT^{-i_j}B_r(x)\right)\!\left(1
+o(1)\right)
$$
as $r\to0$. We have thus verified Property~(5) for the adapted neighbourhood system.

 The result now follows from Theorem~\ref{escape.balls}.
\end{proof}

\end{document}